\documentclass[11pt,a4paper]{article}

%% Packages
%% ========
\usepackage[OT1]{fontenc}
\usepackage{lmodern}
\usepackage[english]{babel}
\usepackage[utf8x]{inputenc}
\usepackage[sc]{mathpazo}
\usepackage{amsmath,amssymb,amsfonts,mathrsfs,amstext}
\usepackage[thmmarks]{ntheorem}
\usepackage{wrapfig}
\usepackage{nicefrac}
\usepackage{tabularx}

%% See the TeXed file for more explanations

%% [OPT] Multi-rowed cells in tabulars
%\usepackage{multirow}

%% [REC] Intelligent cross reference package. This allows for nice
%% combined references that include the reference and a hint to where
%% to look for it.
\usepackage{varioref}

%% [OPT] Easily changeable quotes with \enquote{Text}
%\usepackage[german=swiss]{csquotes}

%% [REC] Format dates and time depending on locale
\usepackage{datetime}

%% [OPT] Provides a \cancel{} command to stroke through mathematics.
%\usepackage{cancel}

%% [NEED] This allows for additional typesetting tools in mathmode.
%% See its excellent documentation.
\usepackage{mathtools}

%% [ADV] Conditional commands
%\usepackage{ifthen}

%% [OPT] Manual large braces or other delimiters.
%\usepackage{bigdelim, bigstrut}

%% [REC] Alternate vector arrows. Use the command \vv{} to get scaled
%% vector arrows.
%\usepackage[h]{esvect}

%% [NEED] Some extensions to tabulars and array environments.
\usepackage{array}

%% [OPT] Postscript support via pstricks graphics package. Very
%% diverse applications.
%\usepackage{pstricks,pst-all}

%% [?] This seems to allow us to define some additional counters.
%\usepackage{etex}

%% [ADV] XY-Pic to typeset some matrix-style graphics
%\usepackage[all]{xy}

%% [OPT] This is needed to generate an index at the end of the
%% document.
%\usepackage{makeidx}

%% [OPT] Fancy package for source code listings.  The template text
%% needs it for some LaTeX snippets; remove/adapt the \lstset when you
%% remove the template content.
\usepackage{listings}
\lstset{language=TeX,basicstyle={\normalfont\ttfamily}}

%% [REC] Fancy character protrusion.  Must be loaded after all fonts.
%\usepackage[activate]{pdfcprot}

%% [REC] Nicer tables.  Read the excellent documentation.

\usepackage[linkcolor=black,colorlinks=true,citecolor=black,filecolor=black]{hyperref}

\usepackage{algpseudocode}
\usepackage{algorithm}

\usepackage{color}
\usepackage{caption}
\usepackage{subcaption}
\captionsetup{textfont = sl} 
\usepackage[noabbrev]{cleveref}
\usepackage{csquotes}
\usepackage{bm}
\usepackage{mathtools}
\usepackage{booktabs}
%% Theorem-like environments

%% This can be changed according to language. You can comment out the ones you
%% don't need.

\numberwithin{equation}{section}

%% German theorems
%\newtheorem{satz}{Satz}[chapter]
%\newtheorem{beispiel}[satz]{Beispiel}
%\newtheorem{bemerkung}[satz]{Bemerkung}
%\newtheorem{korrolar}[satz]{Korrolar}
%\newtheorem{definition}[satz]{Definition}
%\newtheorem{lemma}[satz]{Lemma}
%\newtheorem{proposition}[satz]{Proposition}

%% English variants
\newtheorem{theorem}{Theorem}[section]

\newtheorem{remark}[theorem]{Remark}

\newtheorem{lemma}[theorem]{Lemma}

%% Proof environment with a small square as a "qed" symbol
\theoremstyle{nonumberplain}
\theorembodyfont{\normalfont}
\theoremsymbol{\ensuremath{\blacksquare}}
\newtheorem{proof}{Proof}
%\newtheorem{beweis}{Beweis}

%% Custom commands
%% ===============

%% Special characters for number sets, e.g. real or complex numbers.

%% Fixed/scaling delimiter examples (see mathtools documentation)

%% Use the alternative epsilon per default and define the old one as \oldepsilon

\renewcommand{\epsilon}{\ensuremath\varepsilon}

%% Also set the alternate phi as default.

\renewcommand{\phi}{\ensuremath{\varphi}}

\DeclareMathAlphabet{\mathpzc}{OT1}{pzc}{m}{it}

\newcommand*\Laplace{\mathop{}\!\mathbin\bigtriangleup}

\newcommand{\NORM}[1]{\left\lVert#1\right\rVert} %already defined somewhere

\newcommand{\DEF}{\coloneqq}

\newcommand{\RR}{\mathbb{R}}
\newcommand{\CC}{\mathbb{C}}
\newcommand{\NN}{\mathbb{N}}
\newcommand{\ZZ}{\mathbb{Z}}
\newcommand{\Om}{\Omega}

\newcommand{\del}{\partial}

\newcommand{\MID}{\!\! \mid\!}

\newcommand{\cC}{\mathcal{C}}
\newcommand{\eps}{\epsilon}

\newcommand{\OO}{\mathcal{O}}

\newcommand{\udk}{u^{\delta k}}
\newcommand{\udknull}{u^{\delta k}_0}

\newcommand{\udks}{u^{\delta k}_s}

\newcommand{\Uk}{U^{k}}
\newcommand{\Uknull}{U^{k}_0}

\newcommand{\Uks}{U^{k}_s}

\newcommand{\Gkp}{\Gamma^k_+}

\newcommand{\Gdkp}{\Gamma^{\delta k}_+}

\newcommand{\intd}{\mathrm{d}}

\newcommand{\curlXe}{{\mathcal{X}^\epsilon}}
\newcommand{\curlYe}{{\mathcal{Y}^\epsilon}}

\newcommand{\LLe}{\mathcal{L}^\eps}
\newcommand{\LLei}{\mathcal{L}^{\eps_i}}
\newcommand{\LLeinv}{(\LLe)^{-1}}

\newcommand{\TransT}{\mathrm{T}}

%% Document information
%% ====================
\title{{A mathematical and numerical framework for gradient meta-surfaces built upon periodically repeating arrays of Helmholtz resonators}}
\date{}
%\title{}
\author{ Habib Ammari\thanks{\footnotesize Department of Mathematics, ETH Z\"urich, R\"amistrasse 101, CH-8092 Z\"urich, Switzerland (habib.ammari@math.ethz.ch, kthim.imeri@sam.math.ethz.ch).} %\and Oscar Bruno\thanks{Computing \& Mathematical Sciences, California Institute of Technology, 1200 E California Blvd, Pasadena, CA 91125, USA (obruno@caltech.edu).} 
\and Kthim Imeri\footnotemark[1] 
}

\begin{document}
	\maketitle

\begin{abstract}
In this paper a mathematical model is given for the scattering of an incident wave from a surface covered with microscopic small Helmholtz resonators, which are cavities with small openings. More precisely, the surface is built upon a finite number of Helmholtz resonators in a unit cell and that unit cell is repeated periodically. To solve the scattering problem, the mathematical framework elaborated in [Ammari et al., Asympt. Anal., 114 (2019), 129--179] is used.
The main result is an approximate formula for the scattered wave in terms of the lengths of the openings. Our framework provides analytic expressions for the scattering wave vector and angle and the phase-shift. It justifies the apparent absorption. Moreover, it shows that at specific lengths for the openings and a specific frequency there is an abrupt shift of the phase of the scattered wave due to the subwavelength resonances of the Helmholtz resonators.  A numerically fast implementation is given to identify a region of those specific values of the openings and the frequencies.
\end{abstract}

\def\keywords2{\vspace{.5em}{\textbf{Mathematics Subject Classification
(MSC2000).}~\,\relax}}
\def\endkeywords2{\par}
\keywords2{{35B27, 35A08, 35B34, 35C20.}}

\def\keywords{\vspace{.5em}{\textbf{ Keywords.}~\,\relax}}
\def\endkeywords{\par}
\keywords{{Gradient meta-surface, subwavelength resonance, Helmholtz resonator, apparent full transmission and absorption, abrupt phase-shift.}}

%\tableofcontents
%\mainmatter

\newcommand{\xS}{{x_{\textrm{S}}}}
\newcommand{\GSte}{{\Gamma_{\textrm{S}}}}
\newcommand{\lam}{\lambda}
\newcommand{\SxS}{\mathrm{S}_{\xS}^\lam}
\newcommand{\lgste}{\lambda^{\GSte}}
\newcommand{\GDir}{{\Gamma_{\textrm{D}}}}
\newcommand{\GNeu}{{\Gamma_{\textrm{N}}}}
\newcommand{\GDel}{{\Gamma_{\Delta}}}

\section{Introduction}\label{Ch:Introduction}

%Say that we can reduce the problem to 2D in papers like the one with micro gold quaders.

Surfaces covered by a microscopic structure can display unforeseen physical properties, which can be applied in the everyday life to have advantageous effects like anti-reflection coating and high-efficiency light absorbers, or enhancers. These are called meta-surfaces and have been subject to intensive research \cite{Lin298, Xingjie13, Lingling13, Nanfang14, HRSuperLens, maurel1, maurel2}.
We are particularly inspired by the findings in \cite{add, BroadbandAnomalousReflection}. In \cite{BroadbandAnomalousReflection}, the authors built a gradient meta-surface covered with microscopic small gold plates of different sizes, which are energized by an electric current, and afterwards the authors physically illuminated that surface by an incident light with a particular frequency and particular angle with respect to the ground, and emphasized a reflection with an altered outgoing angle. For certain angles of incidence, they achieved an absorption of the impinging wave. More importantly, they observed at the resonant frequencies an abrupt shift  of the phase  of the scattered wave. This unusual behaviour of the phase shift at muti-resonances is intrinsic to gradient meta-surfaces \cite{reviewmeta}. 

Here, we mainly consider acoustic waves. Our objective is to uncover the behaviour of a meta-surface built upon microscopic Helmholtz resonators, one such Helmholtz resonator can be any cavity with a small hole, as for example a water bottle with a small opening compared to its size. Such a water bottle admits a high pitched noise when blown upon, that is an effect of the resonance. To be more precise in our design, the structure is a periodically repeated ensemble of rectangular Helmholtz resonators each with a different size, where the opening is placed on the center of their ceiling. As it was the case in the paper with the gold plates, we let a plane wave hit the gradient meta-surface with a certain angle and frequency. We can expect that for  appropriate frequencies and appropriate angles of incidence, the resonances of the cavities interact with each other and produce a modified reflected wave. Although we work here in two dimensions only, it is enough as pointed out in \cite{BroadbandAnomalousReflection}, because there the third dimension, which corresponds to the depth, has no measurable influence.

A solution for the scattering problem is provided in Theorem \ref{thm:mainresults}. The numerical results show an abrupt phase-shift for the scattered wave. Applying the techniques developed in \cite{FWMSP1, HabibHai, BubbleMetaScreen}, we provide an asymptotic formula in which the resonance is involved in the scattered wave. That equation depends on the controllable parameters, i.e., the incident frequency and the lengths of the openings of the Helmholtz resonators. It also depends on a matrix, which captures the coupling between the Helmholtz resonators. Controlling such parameters, we can exploit the resonance effect, which leads to a precise description of the unusual scattering and absorption properties of the gradient meta-surface. This is further investigated in our numerical applications at the end of this paper. From the proof of Theorem \ref{thm:mainresults}, we can extract a formula for a numerical evaluation of the scattered wave in the near-field and there we observe other remarkable outcomes like an enhancement of the amplitude of the scattered field, which is reminiscent of the results in \cite{HRSuperLens}. Finally, it is worth emphasizing the connection between our results and those obtained for periodic arrays of narrow slits in the series of papers \cite{hai1, hai2, hai3, hai4, hai5, hai6}.

This paper is organized as follows. In Section \ref{sec:Prelim} we prepare the mathematical foundation for the main result. There we define the geometry of the structure, the equation for the plane wave and the partial differential equation which models the solution to the problem, and make a short remark about the physical meaning for the boundary conditions for acoustic walls. In Section \ref{Sec:MainResult}, we  show the main results and comment on the appropriate choices of parameters, like the coupling matrix  and the scaling factor and describe a way to utilize the resonance effect. In Section \ref{sec:MainProof}, we state and prove Theorem \ref{thm:mainresults}.  In Section \ref{sec:NumImplTest}, we first explain how we implemented the numerics in Matlab and how we tested them. Then we present numerical applications of the results of Theorem \ref{thm:mainresults} for some certain geometries. In Section \ref{sec:ConcludingRemarks}, we conclude the paper with final considerations, open questions and possible future research directions.

\makeatletter
\def\moverlay{\mathpalette\mov@rlay}
\def\mov@rlay#1#2{\leavevmode\vtop{%
   \baselineskip\z@skip \lineskiplimit-\maxdimen
   \ialign{\hfil$\m@th#1##$\hfil\cr#2\crcr}}}
\newcommand{\charfusion}[3][\mathord]{
    #1{\ifx#1\mathop\vphantom{#2}\fi
        \mathpalette\mov@rlay{#2\cr#3}
      }
    \ifx#1\mathop\expandafter\displaylimits\fi}
\makeatother

\newcommand{\cupdot}{\charfusion[\mathbin]{\cup}{\cdot}}
\newcommand{\bigcupdot}{\charfusion[\mathop]{\bigcup}{\cdot}}
\newcommand{\OmBar}{\overline{\Om}}
\newcommand{\ZxS}{\mathrm{Z}_{\xS}^k}
\newcommand{\ZxSD}{\mathrm{Z}_{\mathrm{D}, \xS}^k}
\newcommand{\ZxSN}{\mathrm{Z}_{\mathrm{N}, \xS}^k}
\newcommand{\lgdir}{\lambda^{\GDir}}
\newcommand{\lgfull}{\lambda^{\del\Om}}
\newcommand{\lgempt}{\lambda^{\varnothing}}
\newcommand{\Hank}[1]{\mathrm{H}^{(1)}_{#1}}
\newcommand{\Sobo}{{H}}
\newcommand{\Sobominhalf}{{H}^{-\nicefrac{1}{2}}}
\newcommand{\Soboplushalf}{{H}^{\nicefrac{1}{2}}}
\newcommand{\Hminhalftilde}{\widetilde{H}^{-\nicefrac{1}{2}}}
\newcommand{\HminhalftildeNull}{\widetilde{H}^{-\nicefrac{1}{2}}_{\langle 0\rangle}}
\newcommand{\Hplushalfast}{{H}^{\nicefrac{1}{2}}_{\ast}}
\newcommand{\SGDk}{\mathcal{S}_\GDir^k}
\newcommand{\SGNk}{\mathcal{S}_\GNeu^k}
\newcommand{\KGNkstar}{(\mathcal{K}^{k}_\GNeu)^{\ast}}
\newcommand{\KGGkstar}{(\mathcal{K}^{k}_{\del\Om})^{\ast}}
\newcommand{\dSGDk}{\del\mathcal{S}^{k}_{\GDir}}
\newcommand{\Gk}{\Gamma^k}
\newcommand{\calAk}{\mathcal{A}^k}
\newcommand{\calAe}{\mathcal{A}_\eps}
\newcommand{\calBe}{\mathcal{B}_\eps}
\newcommand{\calAz}{\mathcal{A}_0}
\newcommand{\psiDN}{\begin{bmatrix} \psi\MID_{\GDir}\\ \psi\MID_{\GNeu}\end{bmatrix}}
\newcommand{\calAkz}{\mathcal{A}^k_0}
\newcommand{\calAke}{\mathcal{A}^k_\eps}
\newcommand{\kjzer}{k_j^0}
\newcommand{\kjeps}{k_j^\eps}

\newcommand*\euler{\mathrm{e}}
\newcommand*\imagi{\mathrm{i}\,}
\newcommand*{\bk}{\mathbf{k}}
\newcommand{\colvec}[2][.8]{%
  \scalebox{#1}{%
    \renewcommand{\arraystretch}{.8}%
    $\begin{pmatrix}#2\end{pmatrix}$%
  }
}
\newcommand{\smallvec}[1]{\ensuremath
    \left(\begin{smallmatrix}#1\end{smallmatrix}\right)%
}
\newcommand{\compactvec}[1]{\ensuremath
    \big(\begin{smallmatrix}#1\end{smallmatrix}\big)%
}
\section{Preliminaries}\label{sec:Prelim}
We first fix the geometry of our problem, i.e., the lengths and heights of our Helmholtz resonators, and the positions and lengths of their openings. This allows us to provide a mathematical model for the resulting wave when we imping the structure with an incident wave with a particular angle to the ground and a particular wave vector. That model is built upon a partial differential equation on  a unit strip with an outgoing wave condition and a quasi-periodicity condition on both sides of the unit strips. With that in hand we then can state the main result of this paper.

\subsection{Geometry of the Problem}
Let $\delta>0$, denote a small parameter, let $N\in \NN$ denote the number of Helmholtz resonators, then we denote a single Helmholtz resonator in the unit strip $Y\DEF (-\nicefrac{\delta}{2},\nicefrac{\delta}{2}) \times (0,\infty)\subset \RR^2$ by $D_i \subset Y$, for $i\in\{ 1,\ldots,N\}$. We set every Helmholtz resonator $D_i$ to be a rectangle of height $h_i>0$ and of length $l_i>0$ and its center to be located at $(\xi_i, \nicefrac{h_i}{2})$, where $\xi_i \in (-\nicefrac{\delta}{2}+\nicefrac{l_i}{2},\;\nicefrac{\delta}{2}-\nicefrac{l_i}{2})$. Thus the lower boundaries of the Helmholtz resonators intersect the horizontal axis, but the boundaries on the side do not intersect the side of the unit strip. Every Helmholtz resonator $D_i$ has also an opening gap $\Lambda_i\DEF (\xi-\eps_i, \xi+\eps_i)\times \{h_i\}$  located at the center of its ceiling, that is at $(\xi_i, h_i)$ and it has a radius of $\eps_i>0$, hence a diameter of $2\,\eps_i$. This forms the geometry in our unit strip $Y$. See Figure \ref{fig:UnitStrip} for an illustration of an example with $N=3$. 

\begin{figure}[h]
  \centering
  \includegraphics[width=0.85\textwidth]{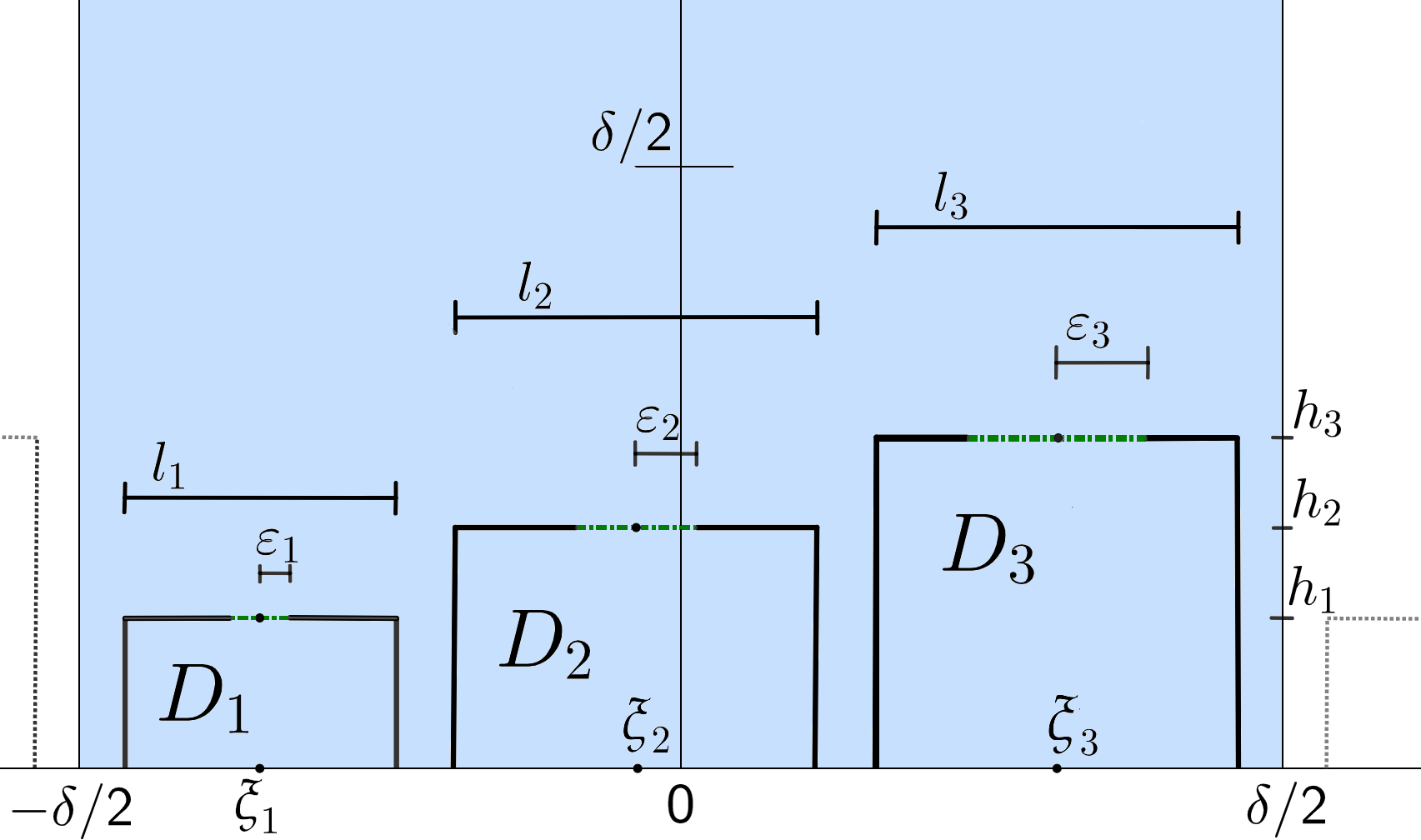}
  \caption{Depicted is the unit strip $Y$ in the color blue. For this example we have chosen $N=3$, thus we have the three Helmholtz resonators $D_1, D_2$ and $D_3$. The three in green dotted lines represent the three openings $\Lambda_1$, $\Lambda_2$ and $\Lambda_3$. }\label{fig:UnitStrip}
\end{figure}

Our gradient meta-surface in $\RR^2$ is built upon periodically repeating the unit strip along the horizontal axis. We are now interested in the result when we imping our gradient meta-surface with a harmonic incident plane wave $U_0^k:\RR^2\rightarrow\CC$ given through
\begin{align*}
	U_0^k(x)\DEF I_0\,\euler^{-\imagi k_1\,x_1}\euler^{-\imagi k_2\,x_2}\,,
\end{align*}
where $\imagi$ is the imaginary unit, $I_0\in\RR$ denotes the intensity of the incident wave, and $k_1\in \RR$ and $k_2<0$ are the horizontal, respectively vertical, components of the wave vector $(k_1, k_2)\in\RR^2$. Let $k^2\DEF k_1^2+k_2^2$ be the length of the wave vector. See Figure \ref{fig:GradSurface} for an illustration of the gradient meta-surface. 

\begin{figure}[h]
  \centering
  \includegraphics[width=0.8\textwidth]{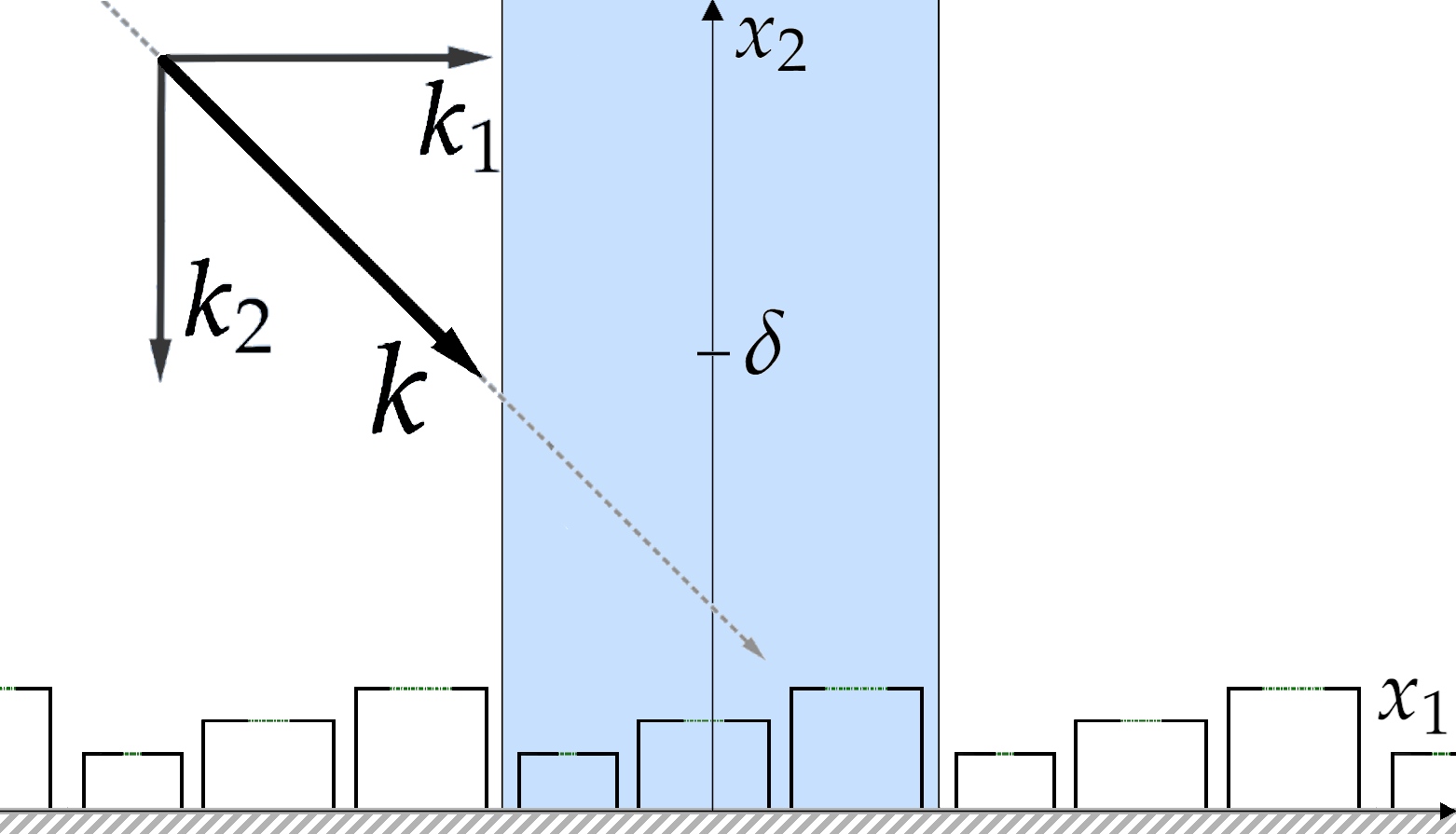}
  \caption{This figure show the unit strip repeated along the $x_1$ axis. The wave vector has the two components $k_1\in\RR$ and $k_2<0$ and the length $k$.}\label{fig:GradSurface}
\end{figure}

\subsection{Mathematical Model for the Scattering Problem}
We denote the wave function, which results from letting our geometry interacts with the incident wave, by $U^k: Y \rightarrow\CC$. We model $U^k$ as a solution to the following partial differential equation:
\begin{align}\label{PDE:Uk}
	\left\{ 
	\begin{aligned}
		 \left( \Laplace + k^2  \right) \Uk &= 0 \quad &&\text{in } \RR_+^2\DEF\{x\in\RR^2\,|\,x_2\geq 0\}\,, \\
		 \Uk \MID_+\! - \Uk\MID_- &= 0 \quad &&\text{on all } \Lambda_i\,,\\
		 \del_\nu \Uk \MID_+\! - \del_\nu \Uk\MID_- &= 0 \quad &&\text{on all }  \Lambda_i\,,\\
		 \del_\nu \Uk \MID_+\! &= 0 \quad &&\text{on all }  \del D_i \setminus \Lambda_i \,,\\
		 \del_\nu \Uk \MID_-\! &= 0 \quad &&\text{on all } \del D_i \setminus \Lambda_i \,,\\
		 \Uk&=0 \quad &&\text{on }\del \RR^2_+ \setminus \cup_{i=1}^N\del D_i \,,
	\end{aligned}
	\right.
\end{align}
where $\cdot\!\mid_+$ denotes the limit from outside of $ D_i$ and $\cdot\!\mid_-$ denotes the limit from inside of $D_i$, and $\del_\nu$ denotes the outside normal derivative on $\del D_i$, for all $i\in\{1,\ldots,N\}$. The first condition, that is, the Helmholtz equation, represents the time-independent wave equation, and arises from the wave equation by separation of variables in time and spatial domain. The second and third conditions represent the transmission conditions. The forth and fifth conditions represent rigid walls. It is also known as the Neumann condition. For acoustic waves, the sixth condition represents a sound cancelling ground layer. It is also known as the Dirichlet condition. 

Similar to diffraction problems for gratings (see, for instance, \cite{MCMP, Bao95} and the references therein), the above system of equations is completed by a certain outgoing radiation condition imposed on the scattered field $\Uks\DEF \Uk-\Uknull$ and a quasi-periodicity condition on $\Uk$, that is,
\begin{align*}
	\big| \del_{x_2} \Uks -\imagi k_2\Uks \big| 
		&\rightarrow 0\quad&&\text{for}\quad x_2\rightarrow\infty\,,\\
   \Uk\big( x+ {\smallvec{\delta\\0}} \big) 
   		&= \euler^{-\imagi k_1 \delta}\,\Uk(x) \quad&&\text{for}\quad x\in \RR_+^2\,.
\end{align*}
Both conditions follow from asserting that the scattered wave $U^k_s$ behaves in the same way in the far-field and in periodicity as the horizontally reflected incident wave. We remark here that the horizontally reflected incident wave, that is the function $I_0\euler^{-\imagi k_1 x_1}\euler^{\imagi k_2 x_2}$, is the scattered solution to (\ref{PDE:Uk}) in absence of Helmholtz resonators.

We further remark that in the general case, where the incident wave is a superposition of plane waves, we can decompose the incident wave using Bloch-Floquet theory, see, for instance, \cite{FlocquetTheory, LPTSA}. We obtain a family of problems to solve, each one with its own outgoing radiation condition. The final solution is then the superposition of all of these solutions.

\newcommand{\rijhdel}{{\mathrm{r}_{i,j}^{h,\del}}}
\newcommand{\rijldel}{{\mathrm{r}_{i,j}^{l,\del}}}
\newcommand{\ridel}{{\mathrm{r}_{i}^{\del}}}
\newcommand{\ri}{{\mathrm{r}_{i}}}
\newcommand{\rex}{{\mathrm{r}^{\text{ex}}}}
\newcommand{\rih}{{\mathrm{r}_{i}^h}}
\newcommand{\ril}{{\mathrm{r}_{i}^l}}
\newcommand{\rjh}{{\mathrm{r}_{j}^h}}
\newcommand{\rjl}{{\mathrm{r}_{j}^l}}
\newcommand{\rijh}{{\mathrm{r}_{i,j}^h}}
\newcommand{\rijl}{{\mathrm{r}_{i,j}^l}}
\newcommand{\fdk}{f^{\delta k}}
\def\lowcomma{_{\textstyle,}}

\section{Main Result}\label{Sec:MainResult}
We assume that 
\begin{align*}
	\delta k < \min \big( \tfrac{\pi}{l_i}\,, \tfrac{\pi}{h_i}\big)\,,
		\quad\text{for all }i\in\{1,\ldots, N\}\,,
\end{align*}
which originates from the fact that $\delta^2 k^2$ has to be smaller than the first non-zero Laplace eigenvalue with Neumann conditions for every Helmholtz resonator $D_i$.

Furthermore, we assume that
\begin{align*}
	\delta k<2\pi-|\delta k_1|\,.
\end{align*}
This is due to Lemma \ref{lemma:rdel,r,rex}, which expresses the behaviour of certain auxiliary functions. This condition can be relaxed, but then the formulas get more complicated.

\begin{theorem}\label{thm:mainresults}
	Let the matrix $R^{\delta k}\in\CC^{N\times N}$ be defined by (\ref{equ:R-decomp}), let the constants  $\ri\,,\ridel \in \CC$, for $i\in\{1,\ldots N\}$, and the constant $\rex \in \CC$ be defined through Lemma \ref{lemma:rdel,r,rex}. Let $\eps\DEF\max(\eps_1,\ldots,\eps_N)>0$ be small enough. Then for all $z\in Y$, where $z_2$ is large enough, we have that
	\begin{align*}
		\Uk(z)-&\Uk_0(z) = I_s\;
			\euler^{-\imagi k_1 z_1}\, 
			\euler^{\imagi k_2 z_2}
			+ \OO(N\,\eps\,\|Q(\delta k)^{-1}\|_2))
			+ \OO(\euler^{-C\,z_2})\,,
	\end{align*}
	for a constant $C>0$ independent of $z$, where $\| \cdot \|_2$ denotes the Frobenius norm for matrices and where $I_s\in\CC$ is given through
	\begin{align*}
		I_s = \sum_{i=1}^N 
					(Q(\delta k)^{-1}\,\fdk)_i\,\euler^{\imagi\delta k_1 \xi_i}
					\big(\ri-\frac{\sin(\delta k_2\,h_i)}{\delta k_2}\big)
			- I_0(1-2\imagi\,\delta k_2\,\rex)\,,
	\end{align*}
	where $\fdk\in\CC^N$ is a vector with the components
	\begin{align*}
		\fdk_i\DEF 2\imagi\,\delta k_2\,I_0\,\euler^{-\imagi \delta k_1\,\xi_i}
			\,\Big(
				\frac{2\,\sin(\delta k_2\,h_i)}{\delta k_2}-\ridel
			\Big)\,,
	\end{align*}
	$Q(\delta k)\in\CC^{N\times N}$ is a matrix given through
	\begin{align*}
		Q(\delta k)\DEF \tfrac{1}{(\delta k)^2}|D|^{-1}+\tfrac{2}{\pi}\,\mathrm{diag}\big(\log(\eps_i/2)\big) + \,R^{\delta k}\,,
	\end{align*}
	$\mathrm{diag}\big(\!\log(\eps_i/2)\big)$ is a $N\times N$ diagonal matrix with diagonal entries $\log(\eps_i/2)$, and where $|D|$ is a $N\times N$ diagonal matrix with diagonal entries $|D_i|$.
\end{theorem}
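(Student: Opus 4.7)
The plan is to reduce the scattering problem to a finite-dimensional linear system on the $N$ openings via layer potentials and matched asymptotic expansions, then read off the far-field contribution of the propagating Floquet mode.

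First, I would introduce a quasi-periodic outgoing Green's function $G^{\delta k}$ for the Helmholtz operator in the unit strip $Y$ with Dirichlet condition on $\partial\RR^2_+\setminus\cup_i\partial D_i$, and a Neumann Green's function $\Neu^{\delta k}_{D_i}$ for each resonator. Using the Dirichlet datum $\psi_i\DEF U^k|_{\Lambda_i}$ on each opening, I would represent $U^k$ outside the cavities by a double-layer potential built from $G^{\delta k}$ against $\psi_i$, and inside each $D_i$ by a double-layer potential built from $\Neu^{\delta k}_{D_i}$ against $\psi_i$. The Neumann transmission condition $\partial_\nu U^k|_+ = \partial_\nu U^k|_-$ on each $\Lambda_i$ then yields a coupled system of integral equations for the densities $\psi_i$. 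This is the standard reformulation used in the bubble-meta-screen literature cited in the introduction.

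Second, I would scale each opening by $\epsilon_i$ and extract the leading behaviour as $\epsilon\to 0$. The key spectral fact is that $\Neu^{\delta k}_{D_i}$ has a pole at $(\delta k)^2 = 0$ from the constant Neumann eigenmode with residue $|D_i|^{-1}$, which produces the $\frac{1}{(\delta k)^2}|D|^{-1}$ block of $Q(\delta k)$ and signals the subwavelength resonance. Near the opening the Neumann kernel splits as $\frac{1}{\pi}\log|x-y|$ plus a regular remainder; replacing each density $\psi_i$ by its mean on $\Lambda_i$ (an $\OO(\epsilon_i)$ approximation) and rescaling, the logarithmic singularity contributes the $\frac{2}{\pi}\log(\epsilon_i/2)$ diagonal, while the regular remainders of $\Neu^{\delta k}_{D_i}$ and of $G^{\delta k}$ build the cross-coupling matrix $R^{\delta k}$ defined in (\ref{equ:R-decomp}). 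Evaluating the right-hand side (the background wave $U_0^k$ minus its ground-plane reflection) at $(\xi_i,h_i)$, together with the local correction encoded by $\mathrm{r}_i^\partial$ from Lemma \ref{lemma:rdel,r,rex}, produces the source vector $\fdk$. The reduced system is $Q(\delta k)\,\bm\alpha = \fdk$ and the effective monopole weights are $\alpha_i = (Q(\delta k)^{-1}\fdk)_i$.

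Third, I would evaluate the representation of $U^k_s$ at a point $z\in Y$ with $z_2$ large. Using the Floquet/plane-wave expansion of $G^{\delta k}$ under the hypothesis $\delta k < 2\pi-|\delta k_1|$, only the zeroth mode propagates; all higher modes decay like $\euler^{-Cz_2}$, which supplies the $\OO(\euler^{-Cz_2})$ remainder. The propagating mode evaluates each $\alpha_i$ against the plane wave $\euler^{\imagi\delta k_1\xi_i}$ with amplitude $\ri - \frac{\sin(\delta k_2 h_i)}{\delta k_2}$ (the values of $\mathrm{r}_i$ and the sinus term coming from the interior/exterior pieces of the representation), and the contribution of the background reflection produces the $-I_0(1-2\imagi\,\delta k_2\,\rex)$ correction. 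Summing gives the stated $I_s$.

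The main obstacle is making the mean-value replacement rigorous in the right functional setting: one must invert the singular integral operator on $\cup_i\Lambda_i$ uniformly as $\epsilon\to 0$, and show that the Neumann-series correction is controlled by $N\,\epsilon\,\|Q(\delta k)^{-1}\|_2$ with a constant independent of $\delta k$ in the stated regime. This is precisely the asymptotic-analysis scheme of the cited works, and once that quantitative invertibility is in hand, the remainder term of the theorem follows and the far-field formula is obtained by direct computation.
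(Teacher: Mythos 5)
Your high-level strategy matches the paper's: reduce the problem to an $N\times N$ linear system for the monopole weights $\alpha_i=(Q(\delta k)^{-1}\fdk)_i$ on the openings via Green's functions for the resonators and for the exterior unit strip, then read off the zeroth (propagating) Floquet mode at large $z_2$, with the evanescent modes supplying the $\OO(\euler^{-Cz_2})$ remainder. You also correctly identify the $\frac{1}{(\delta k)^2}|D_i|^{-1}$ pole from the constant Neumann eigenmode of $D_i$ and the role of the constants from Lemma~\ref{lemma:rdel,r,rex}. However, there are two genuine gaps.

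First, the representation step is wrong as stated. You propose to represent $U^k$ by a double-layer potential against the Dirichlet data $\psi_i=U^k|_{\Lambda_i}$, using a Neumann Green's function $\Neu^{\delta k}_{D_i}$ inside each cavity and a quasi-periodic Green's function with only the ground Dirichlet condition outside. But a double layer built from a Neumann Green's function vanishes identically, since $\del_{\nu_y}\Neu^{\delta k}_{D_i}(z,y)=0$ on $\del D_i\supset\Lambda_i$; and a double layer built from a Green's function that does \emph{not} carry the Neumann condition on $\del D_i$ would fail to satisfy $\del_\nu U^k=0$ on $\del D_i\setminus\Lambda_i$. The paper works with Green's functions $\Ndkidel$ and $\Ndkpdel$ that incorporate the Neumann condition on $\del D_i$ and the Dirichlet condition on the ground; Green's identity then leaves a single-layer-type integral against the common Neumann datum $\mu_i=\del_\nu\udk|_{\Lambda_i}$, as in (\ref{equ:u inside D}) and Lemma~\ref{lemma:Gap-Formula}, and the finite system on the openings comes from enforcing \emph{continuity of $\udk$} across $\Lambda_i$, not the Neumann jump. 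Your unknown and your matching condition are the duals of the ones that actually close the argument.

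Second, your mechanism for the $\tfrac{2}{\pi}\log(\eps_i/2)$ diagonal does not hold. Replacing the density on $\Lambda_i$ by its mean is not an $\OO(\eps_i)$ approximation: the true density blows up like $(\eps_i^2-t^2)^{-1/2}$ at the edges of the opening. The paper isolates the weakly singular log-kernel operator $\LLei[\mu_i](\tau)=\int_{-\eps_i}^{\eps_i}\mu_i(t)\log|\tau-t|\,\intd t$ from the decomposition of $\Ndkidel$ and $\Ndkpdel$, uses the explicit formula (\ref{def:LLeinvOne}), $\LLeinv[1](t)=\big(\pi\log(\eps/2)\sqrt{\eps^2-t^2}\big)^{-1}$, and then integrates over the opening; this inversion simultaneously produces the correct endpoint profile of $\mu_i$ and, upon integration, the $\log(\eps_i/2)$ entry that turns the leading-order system into $Q(\delta k)\int\mu=\fdk$. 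Without this, neither the diagonal of $Q(\delta k)$ nor the quantitative remainder $\OO(N\eps\|Q(\delta k)^{-1}\|_2)$ is obtained. Once the single-layer representation in $\mu_i$ and the $\LLe$-inversion are in place, the rest of your outline (Floquet expansion, extraction of $\ri-\sin(\delta k_2 h_i)/(\delta k_2)$ and $-I_0(1-2\imagi\delta k_2\rex)$) does carry through as you describe.
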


The numerical implementation of the matrix $R^{\delta k}$ and the constants $\ri\,,\ridel\,, \rex$ is explained in Section \ref{sec:NumImplementation}.

The value $\delta>0$ only appears in the form $\delta k_1$ or $\delta k_2$ in the theorem, thus we can use $\delta$ only to scale the incoming wave vector $(k_1,k_2)$. Consider that the geometry scales with $\delta$, and it does so in both spatial dimensions simultaneously.

The value $I_s$ depends on the inversion of an $N\times N$ matrix $Q(\delta k)$, and for some complex values of $\delta k$, which are close to the physical resonance values of the system, we obtain a blow-up in the entries of $Q(\delta k)^{-1}$. In our set-up we do not allow for a complex wave vector, only for real values for $\delta k$, thus we never encounter a blow-up of $Q(\delta k)^{-1}$. Nonetheless, when $\delta k$ approaches the real part of those singular complex values, we see numerically a local extremum in the entries of $Q(\delta k)^{-1}$ and hence a significant effect of the gradient meta-surface on the scattered wave. 

Now $Q(\delta k)$ is of the form: a diagonal matrix added to $R^{\delta k}$, and we can vary 
the entries in the diagonal matrix by varying all $\eps_i$. This gives us a control on the singular behaviour of $Q(\delta k)$. Intuitively, to achieve a highly singular behaviour, we require all the eigenvalues of $Q(\delta k)$ to be close to zero, which we obtain when the entries of the diagonal matrix $\frac{1}{(\delta k)^2}|D|^{-1}+\frac{2}{\pi}\,\mathrm{diag}(\log(\eps_i/2))$ are close to the opposite of the eigenvalues of $R^{\delta k}$. In the simplest case, $R^{\delta k}$ has one real eigenvalue $\lambda_{R^{\delta k}}$ of multiplicity $N$, then we vary those $\eps_i$ so that $-\big(\frac{1}{|D_i|(\delta k)^2}+\frac{2}{\pi}\,\log(\eps_i/2)\big)$ is equal to that eigenvalue for all $i=1,\ldots,N$. This leads to the equation
\begin{align*}
	\eps_i = 2\,\exp\Big(\!-\tfrac{\pi}{2}\big(\tfrac{1}{|D_i|(\delta k)^2}+\lambda_{R^{\delta k}}\big)\Big)\,.
\end{align*}
Since $R^{\delta k}$ will generally not have only one real eigenvalue with multiplicity $N$, we also vary $(\delta k)^2$ in the hope to obtain an eigenvalue with multiplicity at least $2$. In the numerical simulations we elaborate on how we determine those $\eps_i$ on the considered geometries. 

Note that the denser the eigenvalues of $R^{\delta k}$ are, the broader is the range of frequencies for which one can observe a significant effect of the gradient meta-surface on the scattered wave.

\newcommand{\Gkm}{{\Gamma_-^k}}
\newcommand{\Gop}{{\Gamma_+^0}}
\newcommand{\RkGm}{{\mathrm{R}_{\Gamma , +}^k}}
\newcommand{\Nki}{{\mathrm{N}_{i}^k}}
\newcommand{\Rki}{{\mathrm{R}_{i}^k}}
\newcommand{\Nkp}{{\mathrm{N}_{+}^k}}
\newcommand{\Nkm}{{\mathrm{N}_{-}^k}}
\newcommand{\Rkp}{{\mathrm{R}_{+}^k}}
\newcommand{\Rkm}{{\mathrm{R}_{-}^k}}

\newcommand{\Nkidel}{{\mathrm{N}_{i,\,\del}^k}}
\newcommand{\Rkidel}{{\mathrm{R}_{i,\,\del}^k}}
\newcommand{\Nkpdel}{{\mathrm{N}_{+,\,\del}^k}}
\newcommand{\Nkmdel}{{\mathrm{N}_{-,\,\del}^k}}
\newcommand{\Rkpdel}{{\mathrm{R}_{+,\,\del}^k}}
\newcommand{\Rkmdel}{{\mathrm{R}_{-,\,\del}^k}}

\newcommand{\Gdkm}{{\Gamma_-^{\delta k}}}
\newcommand{\RdkGp}{{\mathrm{R}_{\Gamma , +}^{\delta k}}}
\newcommand{\Ndki}{{\mathrm{N}_{i}^{\delta k}}}
\newcommand{\Rdki}{{\mathrm{R}_{i}^{\delta k}}}
\newcommand{\Ndkp}{{\mathrm{N}_{+}^{\delta k}}}
\newcommand{\Ndkm}{{\mathrm{N}_{-}^{\delta k}}}
\newcommand{\Rdkp}{{\mathrm{R}_{+}^{\delta k}}}
\newcommand{\Rdkm}{{\mathrm{R}_{-}^{\delta k}}}

\newcommand{\Ndkidel}{{\mathrm{N}_{i,\,\del}^{\delta k}}}
\newcommand{\Rdkidel}{{\mathrm{R}_{i,\,\del}^{\delta k}}}
\newcommand{\Ndkpdel}{{\mathrm{N}_{+,\,\del}^{\delta k}}}
\newcommand{\Ndkmdel}{{\mathrm{N}_{-,\,\del}^{\delta k}}}
\newcommand{\Rdkpdel}{{\mathrm{R}_{+,\,\del}^{\delta k}}}
\newcommand{\Rdkmdel}{{\mathrm{R}_{-,\,\del}^{\delta k}}}

\newcommand{\curlRdk}{\mathcal{R}^{\delta k}}

\newcommand{\LLetilde}{\tilde{\LLe}}
\newcommand{\Ltilde}{\widetilde{{L}}}

\section{Proof of the Main Results}\label{sec:MainProof}
The outline of the proof is as follows. Before we can begin with the actual idea to prove the result, which starts in Subsection \ref{sec:microscopicview}, we need a bundle of auxiliary functions, like the Neumann functions, which describe the solution to (\ref{PDE:Uk}) with closed openings, and for these Neumann functions we need the fundamental solution to the Helmholtz equation, which we denote here by the Gamma-function. In Subsection \ref{sec:IntegralEqu}, we formulate integral equations, which allow for a numeric evaluation of these Neumann functions.

Then we begin the proof by defining a scaled form of the resulting wave vector. With that microscopic view, we use Green's identity and the conditions from (\ref{PDE:Uk}) to establish an integral equation on functions defined on the openings, which is solvable due to an invertible hypersingular integral operator, defined in Subsection \ref{sec:LLe}. Its solution depends on the constant matrix $R^k$ given in Theorem \ref{thm:mainresults}. Using Green's identity again, we can recover the behaviour of the resulting wave on the far-field from its behaviour on the openings. Re-establishing the macroscopic view, we have then proven Theorem \ref{thm:mainresults}.

\subsection{The Gamma-Function and the Neumann-Function}\label{sec:GammaFctNeumannFct}
%We refer to \cite{} \blue{todo reference} for this subsection. 
Let the wave vector satisfy the condition
\begin{align} \label{assump:k^2<inf}
	k^2 < \inf_{n\in\ZZ\setminus\{0\}}\{| 2\pi\,n-k_1 |^2\}\,.
\end{align} 
Then we define the quasi-periodic fundamental solution $\Gkp$ to the Helmholtz operator $\Laplace +k^2$ by
\begin{align}\label{equ:GammaPlus}
	\Gkp(z,x)
		&\DEF\frac{\euler^{-\imagi k_1  (z_1-x_1)} \big(\euler^{\imagi k_2  \, |z_2-x_2|}-\euler^{\imagi k_2  \, |z_2+x_2|}\big)}{2 \imagi k_2} \nonumber\\
		& - \mkern-15mu \sum_{\substack{n\in\ZZ\setminus\{0\}\\ s_n\DEF \sqrt{| 2\pi\, n - k_1|^2 - k^2}}} 
		  \mkern-25mu\frac{\euler^{-\imagi (2\pi\, n-k_1) (z_1-x_1)}\big(\euler^{- s_n | z_2-x_2 |}-\euler^{- s_n | z_2+x_2 |}\big)}{2\,s_n}
\end{align}
for $z, x \in Y$, $z\neq x$.
The sum in Equation (\ref{equ:GammaPlus}) can also be defined if $k$ does not satisfy the above condition (\ref{assump:k^2<inf}), for this we refer to \cite[Section 3.1]{BubbleMetaScreen}. $\Gkp$ satisfies the equation $(\Laplace_x +k^2)\Gkp(z,\,x)=\delta_z(x)$ for $x\in Y$, where $\delta_z(\cdot)$ denotes the Dirac mass in $z$, and $\Gkp$ is quasi-periodic in its second variable, that is, $\Gkp(z, x+ {\smallvec{\delta\\0}})=\euler^{-\imagi k_1\delta}\,\Gkp(z, x)$. The $+$ in $\Gkp$ originates from the fact that for $x_2$ large enough, $\Gkp(z,x)$ is proportional to $\euler^{+ \imagi k_1 x_1}$ for a fixed $x_2$. 
%We will also need the function
%\begin{align*}
%	\Gkm(z,x)\DEF\Gamma_+^{(-k_1,k_2)}(x,z)\,.
%\end{align*}
%it satisfies the quasi-periodicity condition $\Gkm(z, x+ {\smallvec{\delta\\0}})=\euler^{+\imagi k_1\delta}\,\Gkm(z, x)$. Thus the function $\Gkp(z,x)\,\Gkm(z,x)$ is periodic in $x$. 
For $x$ close enough to $z$ we have that 
\begin{align}
	\Gkp(z,x) = \frac{1}{2\pi}\log(|x-z|) + \RkGm(z,x)\,,
\end{align}
where $\RkGm(z,\cdot)\in H^{\nicefrac{3}{2}}_\text{loc}(\RR^2)$, this follows from describing $\Gkp$ as an infinite sum of Hankel functions of the first kind of order 0, see \cite[Section 3.1]{BubbleMetaScreen}, and the singularity term of that Hankel function, see \cite[Section 2.3]{LPTSA}. Moreover, we have that for $k\rightarrow 0$, $\Gkp\rightarrow\Gop$, where $\Gop$ is the fundamental solution to the Laplace equation, compare \cite[Section 3.1]{BubbleMetaScreen}.

Next we define the Neumann-functions $\Nkp(z,x)$ and $\Nki(z,x)$ for $i\in\{1,\ldots,N\}$. $\Nki(z,x)$ satisfies the equation $\Laplace_x+k^2=\delta_z$ inside $D_i$, with the boundary condition $\del_\nu \Nki(z,\cdot)=0$. Since $D_i$ is assumed to be a rectangle, we can express $\Nki(z,x)$ as a conditionally convergent sum for $x\in D_i$. This can be readily assembled through \cite[Proposition 2.7]{LPTSA} and \cite[Section 3.1]{LaplEigenValuesFunctions}. We also see that $\Nki$ is well-defined except when  $k^2$ is a Laplace eigenvalue with Neumann boundary conditions. We exclude these values for $k$ from now on. We can express $\Nki(z,x)$ for $x$ close enough to $z$, and $k$ smaller than the first non-zero Laplace eigenvalue with the Neumann boundary condition through
\begin{align}\label{equ:Nki_splitt}
	\Nki(z,x) = \frac{1}{2\pi}\log(|x-z|)+ \frac{1}{k^2}\frac{1}{|D_i|} + \Rki(z,x)\,,
\end{align}
where $\Rki(z,\cdot)\in H^{\nicefrac{3}{2}}(D_i)$, see \cite[Lemma 2.9]{LPTSA}, with $H^{\nicefrac{3}{2}}$ being a Sobolev space. We know that $\Rki(z,\cdot)$ is analytic in $k$ for $k$ smaller than the first non-zero Laplace eigenvalue with Neumann conditions in the domain $D_i$, see \cite[Proposition 2.7]{LPTSA}.

The function $\Nkp(z,x)$, $z,x\in Y\setminus \overline{\cup_{i=1}^n D_i}$ is defined as the solution to 
\begin{align}\label{PDE:Nkp}
	\left\{ 
	\begin{aligned}
		 \left( \Laplace + k^2  \right) \Nkp(z,\cdot) &= \delta_z(\cdot) \quad &&\text{in } Y\setminus \overline{\cup_{i=1}^n D_i}, \\
		 \del_\nu \Nkp(z,\cdot) \MID_+\! &= 0 \quad &&\text{on } \del D_i,\\
		 \Nkp(z,\cdot)&=0 \quad &&\text{on } \del \RR^2_+ ,
	\end{aligned}
	\right.
\end{align}
where it is additionally required that the quasi-periodicity condition is satisfied, that is, $\Nkp(z, x+ {\smallvec{\delta\\0}})=\euler^{-\imagi k_1\delta}\,\Nkp(z, x)$, as well as the radiation condition, that is, $\big| \del_{x_2} \Nkp(z,x) -\imagi k_2\Nkp(z,x) \big| \rightarrow 0$ for $x_2\rightarrow\infty$. We note here that the normal to $\del D_i$ still points outside. We can write the Neumann-function $\Nkp$ as 
\begin{align*}
	\Nkp(z,x) = \Gkp(z,x)+\Rkp(z,x)\,,
\end{align*}
where the remainder $\Rkp$ exists in $H^1_{\text{loc}}(Y)$, we refer to \cite[Chapter 7]{LPTSA}. An integral equation is given in the next section.

Until now we have assumed that the variable $z$ is an element in an open domain, but we also need that $z$ is located at the boundary. When $z$ is at the boundary, the Dirac measure in the Helmholtz equation has its mass on the boundary as well and since an integral over the domain covers only half that mass, we have to multiply by a factor of 2 to recover the whole mass, and with that, the spatial singularity at $z$ is twice the singularity when $z$ is in the domain. This changes the Neumann function, as well as the remainders. We denote these variations by
$\Nkidel, \Rkidel, \Nkpdel, \Rkpdel$.
For example, it holds now that for $z\in\cup_{i=1}^N\del D_i$
\begin{align*}
	\Nkpdel(z,x) 
		&= 2\Gkp(z,x)+\Rkpdel(z,x)\, \quad\text{for }x\in Y\setminus \overline{\cup_{i=1}^n D_i}\,,\\
	\Nkidel(z,x) 
		&= \frac{1}{\pi}\log(|x-z|)+ \frac{1}{k^2}\frac{1}{|D_i|} + \Rkidel(z,x)\,.
\end{align*}
Note here that the singularity in $k$ has no factor of $2$. We see this by considering that the function $\Nkidel(z,\,\cdot)-\frac{1}{\pi}\log(|z-\cdot|)$ satisfies the Helmholtz equation with a zero right-hand side with smooth Neumann boundary conditions and thus can be formulated as the convolution of $\Nki$ and the Neumann boundary data, see \cite[Section 2.3.5.]{LPTSA}. Since $\Nki$ has the singularity $\frac{1}{k^2}\frac{1}{|D_i|}$ so does then $\Nkidel$.

This shows that the quotient between $\Rki$ and $\Rkidel$ as well as the quotient between $\Rkp$ and $\Rkpdel$ might not have a simple relation like them being equal to $2$ or $\nicefrac{1}{2}$ after taking a limit to the boundary.
%\blue{Note to me: we do not increase the $1/k^2$ term cuz the resulting pde for that R then has smooth data and thus resulting convolution is pretty standard. Thus R has the singularity $1/k^2$ not twice at much. And it has not the singularity $\log(k)$, because it vanishes on the pde due to the derivative (of a constant function).}

\subsection{Integral Equation for $\Rkp$ and $\Rkpdel$} \label{sec:IntegralEqu}
Let us consider the integral equation for $\Rkpdel(z,x)$ first. Using Green's identity on $\int (\Laplace +k^2) \mathrm{N}^{(-k_1,k_2)}_{+,\del}(x,y)\,\Rkpdel(z,y)$ for $z,x\in\cup_i\del D_i$, over the unit strip $Y$, but outside the Helmholtz resonators, we obtain with the splitting $\mathrm{N}^{(-k_1,k_2)}_{+,\del}(x,y) = 2\,\Gamma^{(-k_1,k_2)}_{+}(x,y) + \mathrm{R}^{(-k_1,k_2)}_{+,\del}(x,y)$ the integral equation
\begin{multline*}
	\mathrm{R}^{(k_1,k_2)}_{+,\del}(z,x) 
		+2\int_{\cup_i\del D_i}\mathrm{R}^{(-k_1,k_2)}_{+,\del}(x,y)\del_{\nu_y}\Gkp(z,y)\intd\sigma_y\\
		= -4\int_{\cup_i\del D_i}\Gamma_{+}^{(-k_1,k_2)}(x,y)\del_{\nu_x}\Gkp(z,y)\intd\sigma_y\,,
\end{multline*}
where we used the quasi-periodicity condition, the Dirichlet ground condition and the radiation condition.
Here we note that the first-named integral has $\mathrm{R}^{(-k_1,k_2)}_{+}$ instead of $\mathrm{R}^{(k_1,k_2)}_{+}$. This is due to the quasi-periodicity conditions. We can correct this, by using Green's identity once again to obtain that 
\begin{multline*}
2\int_{\cup_i\del D_i}\!\!\!\!\mathrm{R}^{(-k_1,k_2)}_{+,\del}(x,y)\del_{\nu_y}\Gkp(z,y)\intd\sigma_y = 2\int_{\cup_i\del D_i}\!\!\!\!\mathrm{R}^{k}_{+,\del}(z,y)\del_{\nu_y}\Gamma^{(-k_1,k_2)}_{+}(x,y)\intd\sigma_y\,,
\end{multline*}
which then leads to the final integral equation for $\Rkpdel$ for $z,x\in\cup_i\del D_i$,
\begin{multline}\label{equ:integralEqu-Rkpdel}
	\mathrm{R}^{k}_{+,\del}(z,x) 
		+2\int_{\cup_i\del D_i}\mathrm{R}^{k}_{+,\del}(z,y)\del_{\nu_y}\Gamma^{(-k_1,k_2)}_{+}(x,y)\intd\sigma_y\\
		= -4\int_{\cup_i\del D_i}\Gamma_{+}^{(-k_1,k_2)}(x,y)\del_{\nu_x}\Gkp(z,y)\intd\sigma_y\,.
\end{multline}

With this at hand, we can formulate an equation for $x$ being outside of all $D_i$, through analogous arguments, and obtain
\begin{multline}\label{equ:integralEqu-Rkpdel:xoutside}
	\mathrm{R}^{k}_{+,\del}(z,x) 
		+\int_{\cup_i\del D_i}\mathrm{R}^{k}_{+,\del}(z,y)\del_{\nu_y}\Gamma^{(-k_1,k_2)}_{+}(x,y)\intd\sigma_y\\
		= -2\int_{\cup_i\del D_i}\Gamma_{+}^{(-k_1,k_2)}(x,y)\del_{\nu_x}\Gkp(z,y)\intd\sigma_y\,.
\end{multline}

If $z$ is outside of all $\overline{D_i}$, the factor 2 does not appear in the splitting of the Neumann function, and we can still use the same arguments as before and obtain for $\mathrm{R}^{k}_{+}$ for $z$ outside of all $\overline{D_i}$ and $x$ on the boundary that
\begin{multline}\label{equ:integralEqu-Rkp}
	\mathrm{R}^{k}_{+}(z,x) 
		+2\int_{\cup_i\del D_i}\mathrm{R}^{k}_{+}(z,y)\del_{\nu_y}\Gamma^{(-k_1,k_2)}_{+}(x,y)\intd\sigma_y\\
		= -2\int_{\cup_i\del D_i}\Gamma_{+}^{(-k_1,k_2)}(x,y)\del_{\nu_x}\Gkp(z,y)\intd\sigma_y\,,
\end{multline}
and if $x$ is outside the boundary we have 
\begin{multline}\label{equ:integralEqu-Rkp:xoutside}
	\mathrm{R}^{k}_{+}(z,x) 
		+\int_{\cup_i\del D_i}\mathrm{R}^{k}_{+}(z,y)\del_{\nu_y}\Gamma^{(-k_1,k_2)}_{+}(x,y)\intd\sigma_y\\
		= -\int_{\cup_i\del D_i}\Gamma_{+}^{(-k_1,k_2)}(x,y)\del_{\nu_x}\Gkp(z,y)\intd\sigma_y\,.
\end{multline}

With these integral equations (\ref{equ:integralEqu-Rkpdel}) to (\ref{equ:integralEqu-Rkp:xoutside}) and the formula for the propagating part in $\Gkp$, see Equation (\ref{equ:GammaPlus}), we can readily show the following lemma:
\begin{lemma}\label{lemma:rdel,r,rex}
	Let $k$ satisfy Condition (\ref{assump:k^2<inf}). There exists a $C>0$ such that for all $z\in \Lambda_i$ and for $x_2\rightarrow \infty$ we have that
	\begin{align} \label{equ:rdel}
		\Rkpdel(z,x)
			=\euler^{-\imagi k_1 z_1}\, \euler^{\imagi k_1 x_1}\, \euler^{\imagi k_2 x_2}
			\;\ridel+\OO(\euler^{-C\, x_2})\,,
	\end{align}	
	where $\ridel$ does not depend on $z$. And  
	for all $x\in \Lambda_i$ and $z_2\rightarrow \infty$ we have that
	\begin{align}\label{equ:ri}
		\Rkp(z,x) = \euler^{-\imagi k_1 z_1}\, \euler^{\imagi k_2 z_2}\, \euler^{\imagi k_1 x_1}\;\ri+\OO(\euler^{- C\,z_2})\,,
	\end{align}
	where $\ri$ does not depend on $z$. And
	for all $z$ outside of all $\overline{D_i}$ and $x_2\rightarrow \infty$ we have that
	\begin{align}\label{equ:rex}
		\Rkp(z,x) =\euler^{-\imagi k_1 z_1}\, \euler^{\imagi  k_1 x_1}\,\euler^{\imagi  k_2 z_2}\, \euler^{\imagi k_2 x_2}\,\rex +\OO(\euler^{- C\,(x_2-z_2)})\,,
	\end{align}
	where $\rex$ does not depend on $z$ or $x$.
\end{lemma}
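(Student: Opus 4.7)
The approach is to feed the explicit splitting (\ref{equ:GammaPlus}) of $\Gkp$ into the integral equations of Section~\ref{sec:IntegralEqu} and read off the three far-field coefficients. Under hypothesis (\ref{assump:k^2<inf}), every $s_n$ with $n\neq 0$ is strictly positive; setting $C\DEF\min_{n\neq 0}s_n>0$, the $n\neq 0$ summands decay like $e^{-C|z_2\pm y_2|}$, so for $y$ ranging over the bounded set $\cup_i\del D_i$ only the $n=0$ propagating mode survives the limit $x_2\to\infty$. Concretely, both $\Gamma^{(-k_1,k_2)}_{+}(x,y)$ and $\del_{\nu_y}\Gamma^{(-k_1,k_2)}_{+}(x,y)$ factor as $e^{\imagi k_1 x_1}e^{\imagi k_2 x_2}$ times a bounded function of $y$, modulo $O(e^{-Cx_2})$.

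For (\ref{equ:rdel}) I would substitute this expansion into (\ref{equ:integralEqu-Rkpdel:xoutside}) with $z\in\Lambda_i$. Both sides then factor the same way, which forces
\begin{equation*}
\Rkpdel(z,x)=e^{\imagi k_1 x_1}e^{\imagi k_2 x_2}\,A(z)+O(e^{-Cx_2}),
\end{equation*}
with $A(z)$ an explicit boundary integral over $\cup_i\del D_i$ in $\Rkpdel(z,\cdot)$ and $\del_{\nu_y}\Gkp(z,\cdot)$. Quasi-periodicity of $\Gkp$ and $\Rkpdel$ in the first argument (phase $e^{-\imagi k_1\delta}$) already strips the Bloch factor $e^{-\imagi k_1 z_1}$ out of $A(z)$, and what is left, restricted to $\Lambda_i$ (where $z_2=h_i$ is fixed), will be the constant $r_i^\del$. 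Claims (\ref{equ:ri}) and (\ref{equ:rex}) follow by the same mechanism: for (\ref{equ:ri}) I would apply the reciprocity $\Nkp^{(k_1,k_2)}(z,x)=\Nkp^{(-k_1,k_2)}(x,z)$ to swap source and observer, reducing the asymptotic $z_2\to\infty$ with $x\in\Lambda_i$ to the case already handled; for (\ref{equ:rex}) I would use (\ref{equ:integralEqu-Rkp:xoutside}) with both $z$ and $x$ off all of the cavities, where the propagating modes of $\Gkp(z,\cdot)$ and $\Gamma^{(-k_1,k_2)}_{+}(x,\cdot)$ multiply to produce the extra factor $e^{\imagi k_2 z_2}$ in the coefficient together with the error $O(e^{-C(x_2-z_2)})$.

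The principal obstacle I expect is precisely the last step: once the Bloch factors are stripped off, showing that the remaining coefficient is actually constant in $z_1$ on $\Lambda_i$ (and not merely $\delta$-periodic) requires more than quasi-periodicity. To close this gap I would apply a second Green's identity on a truncated domain $Y\cap\{y_2<L\}\setminus\cup_j\overline{D_j}$ with $L\to\infty$, using the plane-wave test function $\Psi(y)\DEF -e^{-\imagi k_1 y_1}\sin(k_2 y_2)/k_2$, which solves the Helmholtz equation on $\RR^2_+$, vanishes on $\del\RR^2_+$, and carries a quasi-periodic phase matching $\Nkp(z,\cdot)$ so that the two vertical sides of $Y$ cancel. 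Matching the representation of the far-field coefficient produced by this Green's identity against the one read off from the integral equation collapses the apparent $z_1$-dependence on $\Lambda_i$ to a single scalar, which then pins down $r_i^\del$, $r_i$ and $r^{\text{ex}}$ as specific constants depending only on the cavity index.
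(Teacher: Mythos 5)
Your reading of the far-field mechanism matches what the paper uses: under Condition (\ref{assump:k^2<inf}), with $C\DEF\min_{n\neq 0}s_n>0$, the $n\neq 0$ Rayleigh modes in (\ref{equ:GammaPlus}) are $O(\euler^{-Cx_2})$ uniformly for $y$ in the bounded set $\cup_i\del D_i$, and feeding the surviving $n=0$ mode into the integral equations (\ref{equ:integralEqu-Rkpdel})--(\ref{equ:integralEqu-Rkp:xoutside}) produces the stated factorisations. The paper itself declares the lemma ``readily shown'' from exactly these two ingredients, so the first half of your argument is the intended one.

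The obstacle you flag -- that $\ridel$ (resp.\ $\ri$) should not vary as $z$ (resp.\ $x$) sweeps through $\Lambda_i$ -- is real, but your Green's-identity fix does not close it, for two reasons. First, for the vertical-side integrals of $Y$ to cancel in Green's identity the test function must carry the \emph{opposite} Bloch phase to $\Nkp(z,\cdot)$; this is exactly why the paper pairs $\Nkp^k$ with $\Gamma^{(-k_1,k_2)}_{+}$ throughout Section~\ref{sec:IntegralEqu}. Your $\Psi(y)=-\euler^{-\imagi k_1 y_1}\sin(k_2 y_2)/k_2$ carries the \emph{same} phase $\euler^{-\imagi k_1\delta}$ as $\Nkp(z,\cdot)$, so the side fluxes acquire a net factor $\euler^{-2\imagi k_1\delta}$ and do not cancel; the correct choice is $\Psi(y)=-\euler^{\imagi k_1 y_1}\sin(k_2 y_2)/k_2$. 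Second, and more fundamentally, even with the corrected $\Psi$ the identity on $Y\cap\{y_2<L\}\setminus\cup_j\overline{D_j}$ yields a relation of the shape
\begin{equation*}
\Psi(z)=\pm\sum_j\int_{\del D_j}\Nkpdel(z,y)\,\del_\nu\Psi(y)\,\intd\sigma_y\;+\;\bigl(\text{far-field flux of }\Nkpdel(z,\cdot)\bigr)\,,
\end{equation*}
and the boundary integral on the right is a genuinely $z$-dependent quantity: it involves $\Nkpdel(z,\cdot)$, whose logarithmic singularity sits at $z\in\Lambda_i$, and it is in no way of the separable form $\euler^{-\imagi k_1 z_1}\cdot(\text{const})$ as $z_1$ moves across $\Lambda_i$. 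Equating two $z_1$-dependent representations of the far-field coefficient cannot make that dependence disappear. What actually saves the day is that it does not need to: $\Lambda_i$ has diameter $2\eps_i$, and the passage from Lemma~\ref{lemma:Gap-Formula} to the vector $\fdk$ and to (\ref{equ:OpeningEquation,nonMat}) already replaces $\euler^{-\imagi\delta k_1(\xi_i+\tau)}$ by its value at $\tau=0$ and dumps the $O(\tau)$ remainder into the $O(\eps)$ error that eventually appears in Theorem~\ref{thm:mainresults}. The constants $\ridel$, $\ri$, $\rex$ should simply be read off at a reference point ($z=(\xi_i,h_i)$, resp.\ the $z_2\to\infty$ limit), with the residual variation over $\Lambda_i$ of the same $O(\eps_i)$ order as everything else being neglected. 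So the extra Green's identity is neither correct as proposed nor needed to obtain the lemma in the form the paper actually uses.
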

Here we note that if $k$ does not satisfy  Condition (\ref{assump:k^2<inf}), then the propagating part in $\Gkp$ has a different form, which then will alter the formulas in the above lemma as well. We do not consider that case in this paper.

\subsection{The Hyper-Singular Operator $\LLe$} \label{sec:LLe}

Let $\mu'$ represent the distributional derivative of $\mu$. We define the following spaces and their respective norms:
\begin{align*}%\label{def:curlXe}
   \curlXe
       &\DEF \left\{ \mu\in L^2((-\eps,\eps))\,\middle|\, \int_{-\eps}^\eps \sqrt{\eps^2-t^2}\,|\mu(t)|^2 \intd t \!<\! \infty \right\}\,,\\
   \curlYe
       &\DEF \left\{ \mu\in\cC^0([-\eps,\eps])\,\middle|\, \exists \,\mu'\in\curlXe \right\}\,,\\
   \NORM{\mu}_{\curlXe} 
       &\DEF \left( \int_{-\eps}^\eps \sqrt{\eps^2-t^2} \,|\mu(t)|^2\intd t \right)^{1/2},\,\\
   \NORM{\mu}_{\curlYe}
       &\DEF\left( \NORM{\mu}^2_{\curlXe} + \NORM{\mu'}^2_{\curlXe} \right)^{1/2}\,.
\end{align*}
Let $\eps>0$. Then we define the operator $\LLe :\curlXe \to \curlYe$ as
\begin{align}\label{def:OperatorsLLe}
   \LLe[\mu](\tau)\DEF&\int_{-\eps}^\eps \mu(t)\log(|\tau-t|)\intd t\,.
\end{align}
Let $\eps>0$ be small enough. Then the operator $\LLe :\curlXe \to \curlYe$ is bijective, see \cite[Chapter 11]{SaranenVainikko2002}, and we have that
\begin{align}\label{def:LLeinvOne}
   \LLeinv[1](t) = \frac{1}{\pi\,\log(\eps/2)\,\sqrt{\eps^2-t^2}}\,.
\end{align}
A general closed form for $\LLeinv$ is given in \cite[Proposition 3.11]{FWMSP1}, from which the function $\LLeinv[1](t)$ can be determined.

\subsection{The Microscopic View} \label{sec:microscopicview}
We define $\udk:\RR^2_+\to \CC$ by $\udk(x)\DEF U^k(\delta \,x)$, where $U^k$ is the resulting wave function, and analogously $\udknull:\RR^2_+\to \CC$, $\udknull(x)\DEF U^k_0(\delta \,x)$.
We readily see that $\udk$ satisfies (\ref{PDE:Uk}) with a scaled geometry and a scaled wave vector $(\delta k_1, \delta k_2)$. Furthermore, we have the following radiation condition and quasi-periodicity condition:
\begin{align*}
	\big| \del_{x_2} \udks -\imagi \delta k_2 \udks \big| 
		&\rightarrow 0\quad&&\text{for}\quad x_2\rightarrow\infty\,,\\
   \udk\big( x+ {\smallvec{1\\0}} \big) 
   		&= \euler^{-\imagi k_1 \delta}\,\udk(x) \quad&&\text{for}\quad x\in \RR_+^2\,,
\end{align*}
where $\udks(x)\DEF\udk(x)-\udknull(x)$. To reduce the amount of symbols we uphold the notation for $Y, D_i, \Lambda_i, l_i, h_i, \xi_i, \eps_i$ after scaling.

\subsection{Collapsing the Wave-Function to the Openings} \label{sec:Collaps}
\begin{lemma} \label{lemma:Gap-Formula}
	Let $i,j\in \{1,\ldots,N\}, i\neq j$ and let $z\in \Lambda_i$. Then we have that
	\begin{align*}
		\int_{\Lambda_i}\del_\nu \udk(y)
				\Big(\Ndkpdel(z,y)+\Ndkidel(z,y)\Big)
		\intd\sigma_y
		+
		\sum_{\substack{j=1\\j\neq i}}^{N}
		\int_{\Lambda_j}\del_\nu \udk(y)
				\Ndkpdel(z,y)
		\intd\sigma_y \\
		=
		2\imagi\,\delta k_2\,I_0\,\euler^{-\imagi \delta k_1\,z_1}
		\Big(
			2\,\frac{\sin(\delta k_2\,z_2)}{\delta k_2}-\ridel
		\Big)\,.
	\end{align*}
\end{lemma}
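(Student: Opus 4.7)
The plan is to produce two expressions for $\udk(z)$ at $z\in\Lambda_i$---one by Green's second identity on $D_i$ against $\Ndkidel(z,\cdot)$, one by Green's second identity on the exterior strip $Y\setminus\overline{\cup_j D_j}$ against $\Ndkpdel(z,\cdot)$---and then to equate them. The nontrivial boundary integral at infinity, handled via Lemma~\ref{lemma:rdel,r,rex}, will supply the entire right-hand side.

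On the inside, $\del_\nu\Ndkidel(z,\cdot)=0$ on $\del D_i$ and $\del_\nu\udk\MID_-=0$ on $\del D_i\setminus\Lambda_i$ (from (\ref{PDE:Uk})) kill most of the boundary integrand, while the doubled-Dirac-mass convention on the boundary point $z$ delivers $\udk(z)$ directly on the volume side. This gives
\[
\udk(z)=-\int_{\Lambda_i}\Ndkidel(z,y)\,\del_\nu \udk(y)\,\intd\sigma_y.
\]

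On the outside, I would work on the truncated strip $\Omega_M\DEF(Y\setminus\overline{\cup_j D_j})\cap\{x_2<M\}$ and send $M\to\infty$. The ground integral vanishes because $\udk=0=\Ndkpdel$ on $\del\RR_+^2\setminus\cup\del D_j$; the vertical-side integrals cancel by quasi-periodicity (after the standard transposition to the dual Neumann function $\mathrm{N}^{(-\delta k_1,\delta k_2)}_{+,\del}$ and its symmetry, as used in Section~\ref{sec:IntegralEqu}); and the $\del D_j$ pieces collapse onto $+\sum_j\int_{\Lambda_j}\Ndkpdel\,\del_\nu\udk\,\intd\sigma$ by the same Neumann-function argument as inside, up to the inward-vs-outward normal sign. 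The only nontrivial contribution is the top slice $\{x_2=M\}$. Combining (\ref{equ:rdel}) with the propagating mode of $\Gdkp$ from (\ref{equ:GammaPlus}) at $z_2=h_i$ gives
\[
\Ndkpdel(z,x)\sim\Big(\ridel-\tfrac{2\sin(\delta k_2 z_2)}{\delta k_2}\Big)\,\euler^{-\imagi\delta k_1 z_1}\,\euler^{\imagi\delta k_1 x_1}\,\euler^{\imagi\delta k_2 x_2},
\]
and decomposing $\udk=\udknull+\udks$ with $\udks$ outgoing at vertical wavenumber $+\delta k_2$, the Wronskian $\udk\,\del_{x_2}\Ndkpdel-\Ndkpdel\,\del_{x_2}\udk$ has its two outgoing-outgoing cross terms (of common order $\euler^{2\imagi\delta k_2 M}$) cancel identically. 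The incoming-outgoing term contributed by $\udknull$ is $x_1$-independent and integrates over $x_1\in(-\tfrac{1}{2},\tfrac{1}{2})$ to $2\imagi\,\delta k_2\,I_0\,\euler^{-\imagi\delta k_1 z_1}(\ridel-2\sin(\delta k_2 z_2)/(\delta k_2))$; the evanescent Bloch modes contribute $\OO(\euler^{-CM})$ and disappear in the limit.

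Equating the two expressions for $\udk(z)$, splitting the $j=i$ term off from the exterior sum so that it can be grouped with the inside contribution against $\Ndkidel$, and multiplying through by $-1$ reproduces the stated identity. The main technical obstacle is precisely the top-boundary computation: one must verify that the unknown outgoing amplitude of $\udks$ really drops out of the Wronskian, so that the right-hand side depends only on $I_0$, the explicit constant $\ridel$ from Lemma~\ref{lemma:rdel,r,rex}, and the geometry through $h_i=z_2$. This cancellation is a direct consequence of $\udks$ and the leading-order part of $\Ndkpdel(z,\cdot)$ both being outgoing plane waves sharing the same vertical wavenumber.
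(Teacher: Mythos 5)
Your proposal is correct and follows essentially the same route as the paper: Green's identity inside $D_i$ against $\Ndkidel$, Green's identity on the exterior strip against $\Ndkpdel$ (with the Dirichlet ground, quasi-periodic sides, and the $\del D_j$ pieces handled exactly as you describe), and then equating and evaluating the top-boundary limit via Lemma~\ref{lemma:rdel,r,rex}. The only difference is that the paper simply writes the exterior representation as $\sum_j\int_{\Lambda_j}\del_\nu\udk\,\Ndkpdel + 2\imagi\delta k_2\lim_{r\to\infty}\int\udknull\,\Ndkpdel$ and refers to \cite{FWMSP1} for the details, whereas you explicitly carry out the Wronskian computation on $\{x_2=M\}$ showing the outgoing--outgoing cross terms cancel and only the $\udknull$ cross term (which is precisely $2\imagi\delta k_2\,\udknull\Ndkpdel$) survives---a welcome expansion of the step the paper outsources.
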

\begin{proof}
	Using Green's identity with the Neumann function $\Ndkidel$ inside $D_i$, we readily obtain for $z\in\Lambda_i$ that
	\begin{align} \label{equ:u inside D}
		\udk(z)=-\int_{\Lambda_i}\del_\nu \udk(y)\Ndkidel(z,y)\intd\sigma_y\,.
	\end{align}
	Analogously we apply Green's identity to $\udk$ using $\Ndkpdel$ on $Y$ with the properties mentioned in Section \ref{sec:GammaFctNeumannFct} and obtain
	\begin{align*}
		\udk(z)= \sum_{j=1}^N \int_{\Lambda_j}
			& \del_\nu \udk(y)\Ndkpdel(z,y)\intd\sigma_y\\
			& + 2\imagi \delta k_2 \lim_{r\rightarrow+\infty}
				\int_{\nicefrac{-1}{2}}^{\nicefrac{1}{2}}
					\udknull\big({\smallvec{y_1\\r}}\big)\Ndkpdel\big(z,{\smallvec{y_1\\r}}\big)
				\intd y_1\,.
	\end{align*}
	For an explicit demonstration we refer to \cite[Proof of Proposition 3.5]{FWMSP1}. Using Equation (\ref{equ:rdel}), that is, 
	$\Rdkpdel(z,x)=\euler^{-\imagi\delta k_1 z_1}\, \euler^{\imagi\delta k_1 x_1}\, \euler^{\imagi\delta k_2 x_2} \,\ridel+\OO(\euler^{-C x_2})$ 
	for $x_2\rightarrow\infty$ and $z_2=h_i$, where $\ridel$ does not depend on $x$, we can infer that 
	\begin{align*}
		\lim_{r\rightarrow+\infty}
				\int_{\nicefrac{-1}{2}}^{\nicefrac{1}{2}}
					\udknull\big({\smallvec{y_1\\r}}\big)\Ndkpdel\big(z,{\smallvec{y_1\\r}}\big)
				\intd y_1
		=I_0\,\euler^{-\imagi \delta k_1\,z_1}
		\big(
			 \ridel-2\,\frac{\sin(\delta k_2\,z_2)}{\delta k_2}
		\big)\,.
	\end{align*}
	Setting the first and second equations to be equal for $z\in\Lambda_i$ and applying the last equation to the second one, we prove the lemma.
\end{proof}

We define then $\mu_i\DEF \del_\nu \udk|_{\Lambda_i}$, and $\mu \DEF (\mu_1,\ldots,\mu_N)$ as well as $\curlRdk: (\curlXe)^N \to (\curlYe)^N $
\begin{align*}
	\curlRdk[\eta]
		%\DEF\Big(
		%	\curlRdk_1[\eta]\;,
		%	\;\ldots\;,
		%	\curlRdk_N[\eta]
		%\Big)
		\DEF\Big(
			\sum_{j=1}^N \curlRdk_{1, j}[\eta_j]\;,
			\;\ldots\;,
			\sum_{j=1}^N \curlRdk_{N, j}[\eta_j]
		\Big)\,,
\end{align*}
where
\begin{align*}
	\curlRdk_{i, j}[\eta_j](\tau)\DEF
		\left\{
			\begin{aligned}
		 		 \int_{-\eps_i}^{\eps_i} \Big[
		 		 		&	\RdkGp\big({\compactvec{z_i\\h_i}}, {\compactvec{y_i\\h_i}}\big)
		 		 			+ \Rdkpdel\big({\compactvec{z_i\\h_i}}, {\compactvec{y_i\\h_i}}\big) \\
		 		 		&\mkern80mu + \Rdkidel\big({\compactvec{z_i\\h_i}}, {\compactvec{y_i\\h_i}}\big)
		 		 	\Big] \eta_i(t)
		 		 \intd t \, \quad &\text{for } i=j\,,\\
		 		 \int_{-\eps_j}^{\eps_j} 
		 		 		&\Ndkpdel\big({\compactvec{z_i\\h_i}}, {\compactvec{y_j\\h_j}}\big)\eta_j(t)
		 		 \intd t \, \quad &\text{for } i\neq j\,,
			\end{aligned}
		\right.
\end{align*}
with $y_i= \xi_i+t$ and $z_i=\xi_i+\tau$ for $i\in\{1,\ldots, N\}$. Using the singular decomposition formulas for $\Gdkp, \Ndkidel$ and $\Ndkpdel$, which are written down in Section \ref{sec:GammaFctNeumannFct}, we can rewrite the equation in Lemma \ref{lemma:Gap-Formula} as
\begin{align}\label{equ:OpeningEquation,nonMat}
	\frac{2}{\pi}\LLei[\mu_i](\tau)
		& +\frac{1}{(\delta k)^2\,|D_i|} \int_{-\eps_i}^{\eps_i}\mu_i(t)\intd t
		+\sum_{j=1}^N \curlRdk_{i, j}[\mu_j](\tau)\nonumber\\
	& = 2\imagi\,\delta k_2\,I_0\,\euler^{-\imagi \delta k_1\,(\xi_i+\tau)}
		\Big(
			2\,\frac{\sin(\delta k_2\,h_i)}{\delta k_2}-\ridel
		\Big)\,
\end{align}
for $\tau\in(-\eps,\eps)$ and $i\in\{1,\ldots,N\}$, as long as $\delta k$ is smaller than the first non-zero Laplace eigenvalue with Neumann boundary conditions. We define $\fdk_i\in\CC$ to be
\begin{align*}
	\fdk_i\DEF 2\imagi\,\delta k_2\,I_0\,\euler^{-\imagi \delta k_1\,\xi_i}
		\,\Big(
			2\,\frac{\sin(\delta k_2\,h_i)}{\delta k_2}-\ridel
		\Big)\,,
\end{align*} 
which expresses the zero order term of the right-hand side with respect to $\epsilon$.
To simplify notation, we define $\fdk\DEF (\fdk_1\,,\,\ldots\,,\, \fdk_N)^\TransT$, where the superscript $\,{}^\TransT$ denotes the transpose, and we define $\LLe$ and $|D|$ to be diagonal matrices with diagonal entries $\LLei$ and $|D_i|$ respectively. Using Taylor series, we can decompose $\curlRdk$ as
\begin{align}\label{equ:R-decomp}
	\curlRdk[\mu] = R^{\delta k} \int_{-\eps}^\eps \mu + \int_{-\eps}^\eps\big(\OO(\eps)\big)_{i, j=1}^{N, N} \;\mu\,,
\end{align}
where $R^{\delta k}\in\CC^{N\times N}$ depends on $\delta k$ and $\int_{-\eps}^\eps \mu  \DEF (\int_{-\eps_1}^{\eps_1} \mu_1\,,\ldots\,,\int_{-\eps_N}^{\eps_N}\mu_N)^\TransT$.

\subsection{Solving for $\del_\nu \udk$ on the Openings}
We can rewrite Equation (\ref{equ:OpeningEquation,nonMat}) in the matrix-form as 
\begin{align}\label{equ:Opening_Id_LLetildeFULL}
\frac{2}{\pi}\LLe[\mu]+\frac{1}{(\delta k)^2}|D|^{-1}\int_{-\eps}^\eps \mu +\curlRdk[\mu]=\fdk\,,
\end{align}
 up to some error of order $\eps$, where still $\mu_i\DEF \del_\nu \udk|_{\Lambda_i}$. We can solve that equation in matrix form for $\del_\nu \udk|_{\Lambda_i}$.

%We consider first that there are $2\, N$ values for $\delta k$, counted with multiplicity, also called resonance values or characteristic values, for which there is no solution. This is proved using the generalized Rouché theorem \cite[Theorem 1.2]{LPTSA}, we refer to \cite{}\blue{reference to our paper Proposition 4.13} where the proof is given for the case $N=2$, but it can be readily generalized. The idea is that perturbing the zeros of a matrix-valued polynomial by adding a higher order monomial with a relatively small enough matrix-valued constant does not change the number of zeros in a small enough neighbourhood. Additionally we have that the number of zeros, counted with multiplicity, is equal to the number of poles, also counted with multiplicity, in a small enough neighbourhood. This also holds for real-valued polynomials.

%The equation values of $\delta k$, where no solution can be found, these are the physical resonance values of the system. Are all complex, and they have a small imaginary part, and half of the resonance values have a positive real part, the other half have a negative real part. To give an explicit formula for those values, we have to consider some approximations. 

To this end, we consider the series expansion $\curlRdk[\mu] = R^{\delta k} \int_{-\eps}^\eps \mu + \int_{-\eps}^\eps\big(\OO(\eps)\big)_{i, j=1}^{N, N} \;\mu$, where $R^{\delta k}\in\CC^{N\times N}$, see Equation (\ref{equ:R-decomp}) and we will neglect the error terms given in $\curlRdk[\mu]$.
Then we can reformulate the equation as
\begin{align}\label{equ:Opening_Id_LLetilde}
	\frac{2}{\pi}\mu+(\LLe)^{-1}\big[(\tfrac{1}{(\delta k)^2}|D|^{-1}+R^{\delta k})\int_{-\eps}^\eps \mu\big] =(\LLe)^{-1}[\fdk]\,,
\end{align}
where $|D|$ is a square diagonal matrix with diagonal-entries $|D_i|$. Consider that $\LLeinv[v](t)=\frac{v}{\pi\,\log(\eps/2)\,\sqrt{\eps^2-t^2}}$ for a constant function with value $v\in\CC^N$, where $\eps=(\eps_1,\ldots,\eps_N)$ and the elementary operations are defined element-wise.

%To find the resonance values we need those values of $\delta k$ for which the equation has no solution. Since the left-hand side is a linear operator in $(\curlXe)^N$, it is enough to show that there exists no solution $\mu$ for the case $\fdk=0$. 
After applying $\int_{-\eps}^\eps$ on both sides of the above equation we can factor out $\int_{-\eps}^\eps \mu\in \CC^N$ on the left-hand side. With the definition
\begin{align*}%\label{def:Q matrix}
	Q(\delta k)\DEF \frac{1}{(\delta k)^2}|D|^{-1}+\frac{2\log(\eps/2)}{\pi} +\,R^{\delta k}\,,
\end{align*}
where $\log(\eps/2)$ is a $N\times N$ diagonal matrix with entries $\log(\eps_i/2)$, we can solve for $\int_{-\eps}^\eps \mu$ and obtain
\begin{align*}
	\int_{-\eps}^\eps \mu 
		\;=\; Q(\delta k)^{-1}\,\fdk\,.
\end{align*}
Embedding this in Equation (\ref{equ:Opening_Id_LLetilde}), we obtain
\begin{align*}%\label{equ:mu_full}
	\del_\nu \udk(t) =
		&\frac{\pi}{2} (\LLe)^{-1} \Big[
			\fdk 
			- (\tfrac{1}{(\delta k)^2}|D|^{-1}+R)\int_{-\eps}^\eps \mu
		\Big]\,
		\\
		=
		&\frac{ \Big(
			\mathrm{I}_N-
			\big(\tfrac{1}{(\delta k)^2}|D|^{-1}+ R^{\delta k}\big)
			\,Q(\delta k)^{-1}
		\Big)\fdk}
		{2 \log(\nicefrac{\eps}{2})\sqrt{\eps^2-t^2}}\,
		\\
		=
		&\frac{
			\,Q(\delta k)^{-1}\,\fdk}
		{\pi \sqrt{\eps^2-t^2}}\,.
\end{align*}

Considering the error term, which we had for $\curlRdk$ as well as for the right-hand side for the equation in Lemma \ref{lemma:Gap-Formula}, we can readily establish an error term for $\del_\nu\udk$ of order $\OO\big(N\,\tfrac{\epsilon}{\sqrt{\eps^2-t^2}}\,\|Q(\delta k)^{-1}\|_2\big)$, where we used \cite[Lemma 3.14]{FWMSP1}.

\begin{remark}
	The values $\delta k_{\mathrm{res}}\in\CC$ for which the operator in Equation (\ref{equ:Opening_Id_LLetildeFULL}) is not invertible are resonance values. Using the generalized Rouché theorem \cite[Theorem 1.2]{LPTSA} we see that those resonance values are of order $\max_i(\eps_i)$ away from those values $\delta k_{\mathrm{app}}\in\CC$ in Equation (\ref{equ:Opening_Id_LLetilde}), which yield no solution $\mu$. We can readily give a condition to determine the $\delta k_{\mathrm{app}}$, that is,
	\begin{align*}
		0\in \mathrm{eig}(Q(\delta k))\,,
	\end{align*}	 
	where $\mathrm{eig}(M)$ is the set of all eigenvalues of a matrix $M\in\CC^{N\times N}$. 
\end{remark}

\begin{remark}
	In \cite[Section 4.3.4]{FWMSP1} it is shown that for $\frac{1}{|\log(\eps_i/2)|}$ small enough we have exactly $2 N$ eigenvalues. This is done using the generalized Rouché theorem \cite[Theorem 1.2]{LPTSA} and the generalized argument principle \cite[Theorem 1.12]{LPTSA}. Note that $\eps_i$ has to be too small for those findings to hold.
\end{remark}

\subsection{Solving for $\udk$ on the far-field}
Consider that for $z\in Y$, but $z$ not element of the closure of our Helmholtz resonators, it holds analogous to the proof of Lemma \ref{lemma:Gap-Formula}, using Green's identity, the Dirichlet condition, the quasi-periodicity condition and the radiation condition, 
\begin{align}\label{equ:udk on closefield}
	\udk (z) = \sum_{i=1}^N \int_{\Lambda_i} \Ndkp(z,y)\del_\nu \udk(y)\intd\sigma_y
		  + 2\imagi\delta k_2\int_{-\nicefrac{1}{2}}^{\nicefrac{1}{2}} \Ndkp(z,{\smallvec{y_1\\r}}) u_0^{\delta k}({\smallvec{y_1\\r}})\intd y_1 \,
\end{align}
for $r\rightarrow\infty$. We recall that $\Ndkp(z,x)=\Gdkp(z,x)+\Rdkp(z,x)$.
Using Equation (\ref{equ:ri}), that is, 
$\Rdkp(z,x) = \euler^{-\imagi\delta k_1 z_1}\, \euler^{\imagi\delta k_2 z_2}\, \euler^{\imagi\delta k_1 x_1}\;\ri+\OO(\euler^{- C\,z_2})$ 
for $z_2\rightarrow\infty$ and $x_2=h_i$, where $\ri$ does not depend on $z$, and using Equation (\ref{equ:rex}), that is, 
$\Rdkp(z,x) =\euler^{-\imagi\delta k_1 z_1}\, \euler^{\imagi \delta k_1 x_1}\,\euler^{\imagi \delta k_2 z_2}\, \euler^{\imagi\delta k_2 x_2}\,\rex +\OO(\euler^{- C\,(x_2-z_2)})$
for $x_2\rightarrow\infty$, where $\rex$ does not depend on $z$ and $x$,
we obtain that	
\begin{align*}
	\udk (z) = 
		\euler^{-\imagi\delta k_1 z_1}\, 
		\euler^{\imagi\delta k_2 z_2}
		\,\Big(
			&\sum_{i=1}^N 
				\big(Q(\delta k)^{-1}\,\fdk\big)_i\,\euler^{\imagi\delta k_1 \xi_i}
				\Big(-\frac{\sin(\delta k_2\,h_i)}{\delta k_2}+\ri\Big)\\
		&- I_0(1-2\imagi\,\delta k_2\,\rex)
		\Big) \;+\; \udk_0(z)\,,
\end{align*}
with an error in $\OO(N\,\eps\,\|Q(\delta k)^{-1}\|_2)+\OO(\euler^{-C\,z_2})$. Re-establishing the macroscopic view and recovering the notation for $Y, D_i, \Lambda_i, l_i, h_i, \xi_i, \eps$, we proved the main result.

\newcommand{\raystretch}[1]{\renewcommand{\arraystretch}{#1}}

\section{Numerical Implementation and Application of the Main Results}\label{sec:NumImplTest}

\subsection{Numerical Implementation}\label{sec:NumImplementation}
To carry out our numerics we rely on Matlab. To find the eigenvalues we use the in-built function \verb+eig+. To find the constant matrix $R^{\delta k}\in\CC^{N\times N}$ and constant vectors $\ri\,,\ridel\in\CC$, for $i\in\{1,\ldots N\}$, and the constant function $\rex\in\CC$, we have to implement $\Gkp, \Nkidel, \Nkp, \Nkpdel$ using integral equations, which we will explain in the following. 

Unfortunately, the series expansion for $\Gkp$ given in Equation (\ref{equ:GammaPlus}) is not converging fast enough and thus we have to apply a fast converging alternative representation, that is the Ewald method, which is described in \cite{EwaldMethod} and which is slightly altered to fit our definition of $\Gkp$ in \cite[Section 4.1]{BubbleMetaScreen}. Here we have to mention that for $k\approx 2\pi\,n-|k_1|$, for any $n\in \NN$, the implementation of $\Gkp$ does not well-behave because of an instability phenomenon, which is called the case of empty resonance. To resolve this one can use different implementations like the Barnett-Greengard method \cite[Section 5.4]{MCMP}.

The computation of $\Nkidel$, is a well-studied process, see, for instance,  \cite[Proposition 2.8 ]{LPTSA} and \cite{NeumannFunctionAdd}. For our implementation, we used the representation $\Nkidel(z,x)=2\,\Gamma^{k}(z,x)+\mathrm{R}^{k}(z,x)$, where $\Gamma^{k}(z,x)=-\frac{\imagi}{4}H_{(0)}^{(1)}(k\,|z-x|)$, where $H_{(0)}^{(1)}$ is the Hankel function of the first kind of order 0, and $\mathrm{R}^{k}$ satisfies the Helmholtz equation with the boundary condition $\del_{\nu_x}\mathrm{R}^{k}(z,x)=-2\,\del_{\nu_x}\Gamma^{k}(z,x)$. Using Green's identity we obtain that
\begin{align*}
	\mathrm{R}^{k}(z,x) -2\int_{\del D_i}\mathrm{R}^{k}(z,y)\del_{\nu_y}\Gamma^k(x,y)\intd\sigma_y
		= -4\int_{\del D_i}\Gamma^k(x,y)\del_{\nu_x}\Gamma^{k}(z,y)\intd\sigma_y\,.
\end{align*}
Applying a discretization with $M$ nodes to the boundary, and approximating the integral with the trapezoidal rule, we obtain a linear system of equation of the form $(\,\frac{1}{2}\,\mathrm{I}_M-{\verb+K+}\,) \,{\verb+R+}=\verb+f+$, from which we obtain a discretized version \verb+R+ of $\mathrm{R}^{k}(z,\cdot)$. In this process, the problem arises that the trapezoidal rule cannot efficiently approximate singular integrals, that is with increasing number of nodes $M$ the error in the approximation might not decay. These troublesome integrals are $\int_{-\eps}^\eps \phi(t)\log(|t|)\intd t$ and $\int_{0}^T \phi(t)\frac{1/M}{t^2+(1/M)^2}\intd t$, for a smooth enough $\phi\in L^2((-\eps_1,\eps_2))$. To overcome the inaccurate approximation we use the following identities:
\begin{align*}
	\int_{-\eps_1}^{\eps_2} \phi(t)\log(|t|)\intd t
		&= \Big[\log(|t|)\int_0^t\phi(\tau)\intd\tau\Big]_{-\eps_1}^{\eps_2}-\int_{-\eps_1}^{\eps_2}\frac{1}{t}\int_0^t\phi(\tau)\intd\tau\intd t\,,\\
	\int_{0}^T \phi(t)\frac{1/M}{t^2+(1/M)^2}\intd t
		&= \int_0^{M\,T}\phi(\arctan(M\,t))\intd t\,,
\end{align*}
and subsequently we use Taylor series for $\phi$ in $\int_0^t\phi(\tau)\intd\tau$ and we condense the nodes close to the bounds of the interval for $\int_0^{M\,T}\phi(\arctan(M\,t))\intd t$. 

Given now $\mathrm{R}^k$, we obtain $\Rkidel$ by
\begin{align*}
	\Rkidel(z,y)= \mathrm{R}^k(z,y)-\frac{1}{k^2\,|D_i|} + 2\big(\Gamma^k(z,x)-\tfrac{1}{2\pi}\log(|z-x|)\big)\,.
\end{align*}
This has the disadvantage that for $k\rightarrow 0$, $\Rkidel$ should converge to a finite value, but because the implementation of $\mathrm{R}^k$ yields a low accuracy for $k\rightarrow 0$, we have that the above implementation of $\Rkidel$ still diverges for $k\rightarrow 0$. To approximate $\Rkidel$ for $k=0$, we choose some non-zero values $k$, for which $\Rkidel$ is stable and extrapolated with a second degree polynomial in $k$. After some testing, we found out that $\min(\frac{\pi}{l_i},\frac{\pi}{h_i})\times [0.25,0.75]$ is an interval of well-computable values for $k$.

The computation of $\Nkpdel$, follows the same idea as for $\Nkidel$, that is we use Green's identity with $\Gkp$ and obtain an integral equation for $\Rkpdel$, which is Equation (\ref{equ:integralEqu-Rkpdel}), if $z$ and $x$ are on the boundary. Applying a discretization with $M$ nodes to the boundary, and approximating the integral with the trapezoidal rule, this once more leads to a linear system of equation of the from $(\,\frac{1}{2}\,\mathrm{I}_M+{\verb+K+}\,) \,{\verb+R+}=\verb+f+$, with the discretized version \verb+R+ of $\mathrm{R}^{k}_{+,\del}(z,\cdot)$. Fortunately, $\Gkp$ exhibits no singular behaviour in $k$ and moreover it has the same spatial singularity behaviour as $\Gamma^k$.

If $x$ is outside of the boundary, we apply a discretization of the integral equation (\ref{equ:integralEqu-Rkpdel:xoutside}) which requires the solution to the integral equation (\ref{equ:integralEqu-Rkpdel}). If $z$ is outside of the boundary we proceed analogously, by first solving a discretized version of Equation (\ref{equ:integralEqu-Rkp}) and then applying it to Equation (\ref{equ:integralEqu-Rkp:xoutside}) if $x$ is also outside the boundaries.

Then the computation of the matrix $R^{\delta k}$, the vectors $\ri$ and $\ridel$ and the constant $\rex$ can be done directly by applying Lemma \ref{lemma:rdel,r,rex}.

We tested our implementation using a discretized version of the Helm\-holtz equation and checking whether $\Rkpdel$ and $\Rkp$ satisfy those. We could not check whether those two functions satisfy the boundary condition because the numerical implementation is unstable for values $z$ too close to the boundary. Another way to test our implementation was to double the periodicity, that is if only one Helmholtz resonator was in the unit strip, we should get the results when we make the unit strip twice as wide. We got in all cases a very small error, which we suspect to be due to the discretization.

\subsection{Numerical Applications}\label{sec:NumApp}

\begin{wrapfigure}{r}{0.4\textwidth} 
  \centering
  \includegraphics[width=0.4\textwidth]{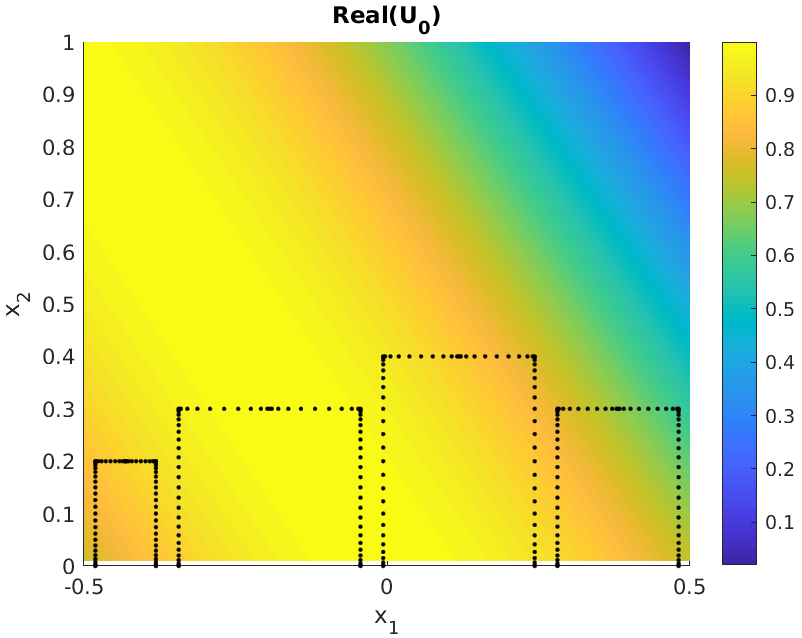}
  \caption{The four Helmholtz resonators of our unit strip are shown. The incident wave has the wave vector $(-k \cos(\theta),-k \sin(\theta))$, where $k =1.663$ and $\theta=\pi/6$.\vspace{-20pt}}\label{fig:U0}
\end{wrapfigure}

Our geometry involves four Helmholtz resonators. The resonator parameters are given by the following table:

\resizebox{0.5\columnwidth}{!}{
\begin{tabular}{c c c c}
	$h_1=0.2$ & $h_2=0.3$ & $h_3=0.4$ & $h_4=0.3$\\
	$l_1=0.1$ & $l_2=0.3$ & $l_3=0.25$ & $l_4=0.2$\\
	$\xi_1=-0.43$ & $\xi_2=-0.19$ & $\xi_3=0.11$ & $\xi_4=0.38$\\
\end{tabular}
}

We set $\delta$ to be $1$, since it is only a scaling parameter. 

In Figure \ref{fig:U0} we see an illustration of the unit strip $Y$ with an incoming wave with $k=1.663$ and a wavelength of $\frac{2\pi}{k}\approx 3.778$. The 300 black points on the boundary of our Helmholtz resonators are the discretization points used in the numerics.

With these values we can already determine the matrix $R^{k}\in\CC^{4 \times 4}$ in Theorem \ref{thm:mainresults}. We are especially interested in the dependence of its eigenvalues on $k \in (0,\pi)$ given the  angle of incidence $\theta$. Additionally, we want to show the behaviour of the eigenvalues of $R^{k}$, when $\theta$ varies. In Figure \ref{fig:Reigv} we have depicted for five different angles of incidence $\theta$ the real parts of the four eigenvalues of $R^{k}$ as a function of $k \in (1.5, 1.755)$. 

For $\theta=\frac{\pi}{2}$ (normal incidence to the meta-surface), we have one non-horizontal  line and three horizontal ones corresponding to eigenvalues with constant real parts in terms of $k$.  For each of these three eigenvalues, the imaginary part is much smaller than the real part while the eigenvalue associated with the non-horizontal line has an imaginary part and a real part  of the same order.

%
%In Figure \ref{fig:Reigv}, we see a peculiar behaviour at the point $(1.663, 0.8858)$, where the eigenvalues  accumulate.

\begin{figure}[h]
  \centering
  \includegraphics[width=0.99\textwidth]{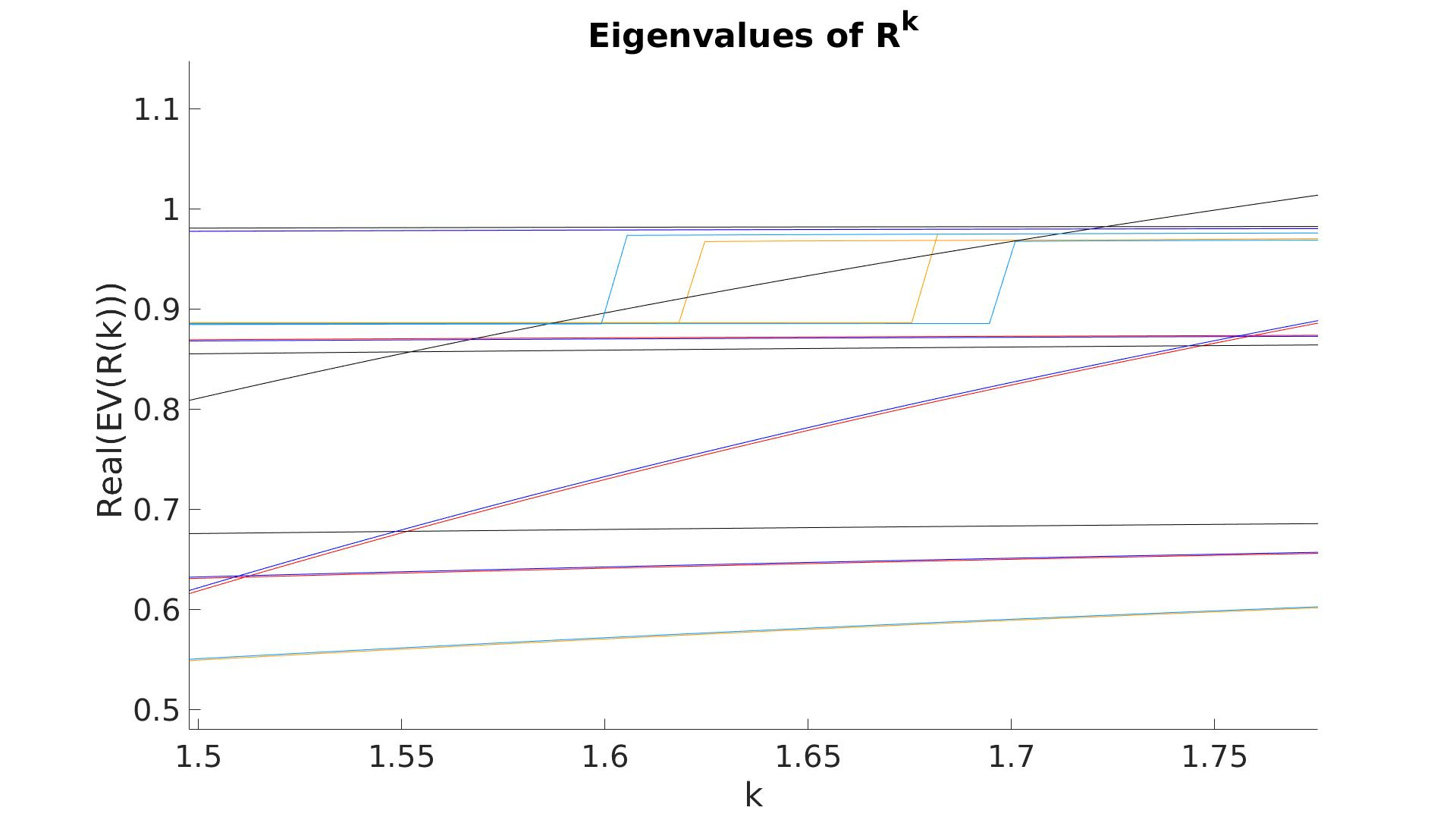}
  \caption{Real parts of the four eigenvalues of $\mathrm{R}^k$ for $k\in [1.5,  1.775]$, corresponding to five different angles of incidence $\theta$ (in green for $\theta=\frac{5}{6}\pi$, in blue for $\theta=\frac{2}{3}\pi$, in black for $\theta= \pi$, in red for $\theta=\frac{1}{3}\pi$, and in yellow for $\theta=\frac{1}{6}\pi$). Here, $EV(R(k))$ stands for eigenvalue of $\mathrm{R}^k$. }\label{fig:Reigv}
\end{figure}

In Figure \ref{fig:Reigv}, we pick the parameter $k\approx 1.663$ and $0.8858 \in \mathrm{Real}(\mathrm{eig}(\mathrm{R}^k))$ in order to determine the $\eps_i$ for $i=1, \ldots,4$. As elaborated after Theorem \ref{thm:mainresults}, we take for $\eps_1$ to $\eps_4$ the values
\begin{align*}
	\eps_i = 2\,\exp\big(\!\!-\tfrac{\pi}{2}\big(\tfrac{1}{|D_i|(1.663)^2}+0.8858\big)\big), \quad i=1,2,3,4.
\end{align*}
We note here that not all $\eps_i$ are physically meaningful, for example we have that $\eps_1 \approx 10^{-14}$, but to give an understanding of the concept, we allow $\eps_i$ to hold these values.

Now that we have $\eps_1$ to $\eps_4$ we can compute $I_s$. Since $I_s$ is a complex number, we can decompose it into a phase $\theta_{I_s}\in(-\pi,\pi]$ and an amplitude  $|I_s|>0$, as $I_s=|I_s|\exp(\imagi\theta_{I_s})$. In Figure \ref{fig:Is} we show the result. We see that for $k\approx 1.663$, that is a wavelength of approximately $3.778$, and for the angle of incidence $\theta \approx\pi$ we have almost no absorption, that is $|I_s|\approx 1$. And just below that value for $k$ we see a full absorption, that is $|I_s|\approx 0$. In Figure \ref{fig:Is} (left), we see an abrupt shift of $\theta_{I_s}$ in that neighbourhood of $k$.

Here we want to note that in Figure \ref{fig:Reigv} we have a second accumulation of the real parts of the eigenvalues at $(1.65,0.98)$. For those values, we achieve a singular behaviour for $Q(k)$ too, but the values of the vector $Q(k)^{-1}f^k$ are moderate, and we do not see a strong resonance effect.

\begin{figure}[h]
  \centering
  \includegraphics[width=0.99\textwidth]{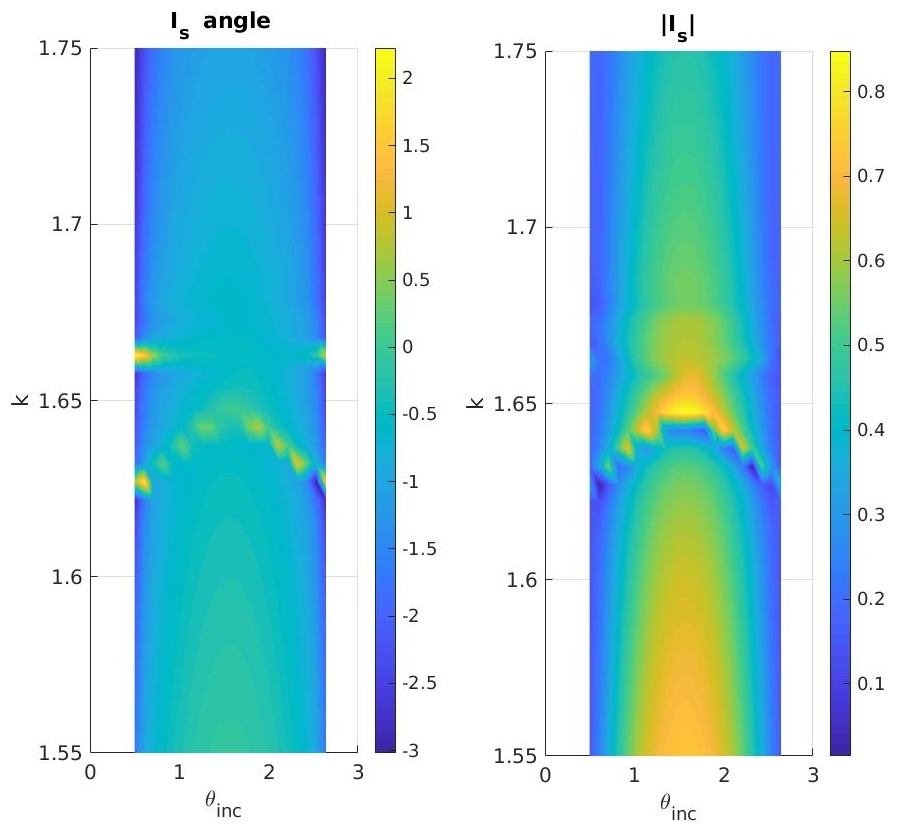}
  \caption{Phase and amplitude of $I_s$ for $k\in(1.5, 1.755)$ and $\theta \in (0.5,2.5)$.}\label{fig:Is}
\end{figure}

From the proof of Theorem \ref{thm:mainresults}, specifically from Equation (\ref{equ:udk on closefield}), we can recover an asymptotic formula for the resulting wave $U^k(z)$ for $z$ close to the Helmholtz resonators, which is given by
\begin{align*}
	U^k(z)= 
		&\sum_{i=1}^N\, \Ndkp\big(\tfrac{z}{\delta} ,{\smallvec{\xi_i\\h_i}}\big) \;\big(Q(\delta k)^{-1} f^{\delta k}\big)_i
		+2\imagi\, I_0 \,\delta k_2 \,\lim_{r\rightarrow\infty}\frac{\Rdkp\big(\tfrac{z}{\delta} ,{\smallvec{0\\r}}\big)}{\euler^{+\imagi \delta k_2 r}}\\
		&+\;U_0^k(z)\;-\;I_0\, \euler^{-\imagi k_1 z_1} \euler^{-\imagi k_2 z_2} \;+\; \OO(N\,\eps\,\|Q(\delta k)^{-1}\|_2)\,,
\end{align*}
where $Q(\delta k)$ and $\fdk$ are as in Theorem \ref{thm:mainresults}. The field inside a Helmholtz resonator is given through Equation (\ref{equ:u inside D}). For $k=1.663$ and $\theta=\pi/6$, we obtain the solution $\mathrm{Real}(U^k(z))$ shown in Figure \ref{fig:Ufield}, where $z$ is located close to the Helmholtz resonator $D_1$ to $D_4$. We also see focal spots with $|\mathrm{Real}(U^k(z))|>1$ on the left and right sides of the unit strip. This outcome is similar to the findings in \cite{HRSuperLens}.

\begin{figure}[h]
  \centering
  \includegraphics[width=0.99\textwidth]{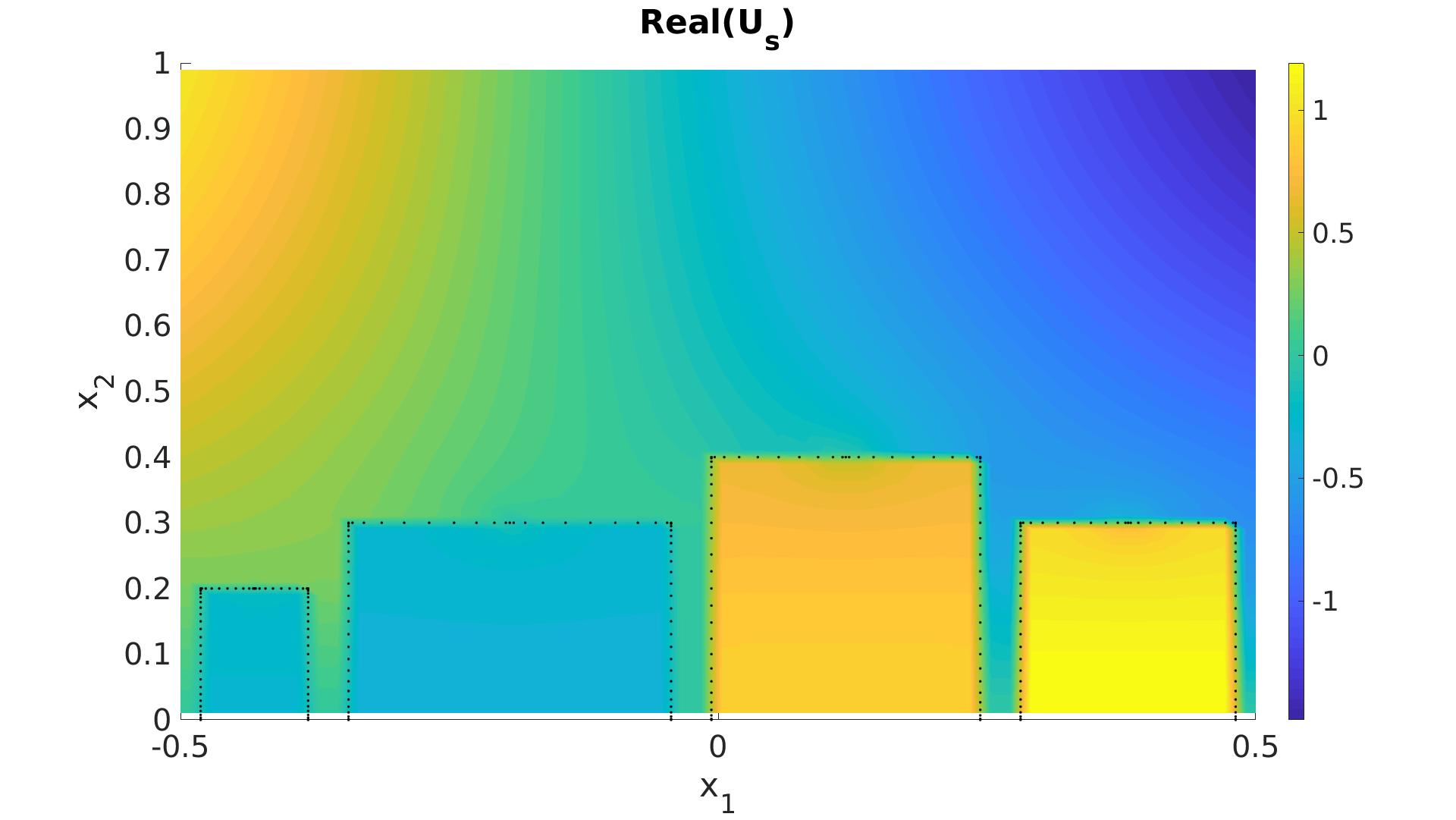}
  \caption{The field $\mathrm{Real}(U^k(z))$ is depicted, where $z$ is close enough to the Helmholtz resonators, $k=1.663$ and $\theta=\pi/6$. This shows the resulting scattering when applying the same set-up as the one in Figure \ref{fig:U0}.}\label{fig:Ufield}
\end{figure}

Finally, it is worth noticing that after simulating several generic examples, it appears that in a set-up with $N$ Helmholtz resonators, we have $N-1$ resonances, which exhibit a small imaginary part and remain at given  $\theta$ constant in terms of  $k$, and one resonance, which has a large imaginary part and whose real part at given $\theta$ increases with  $k$.

\section{Concluding Remarks}\label{sec:ConcludingRemarks}
%Might not need.
%Ideas: next step: walls with a certain length. mixed boundary conditions inside the HRs, refere then to our paper.
In this paper we have established in Theorem \ref{thm:mainresults} that the formula $U^k(z)-U_0^k(z) = I_s\;\euler^{-\imagi k_1 z_1}\, \euler^{\imagi k_2 z_2}$ describes the scattered field in the far-field in our geometry, up to a small error, which depends on the lengths of the openings of the Helmholtz resonators. Moreover, we have seen that $I_s=(v^{\delta k})^\TransT\,Q(\delta k)^{-1}\,f^{\delta k}-b$, for $Q(\delta k)\in\CC^{N\times N}$ and $f^{\delta k}\in\CC^N$ as in Theorem \ref{thm:mainresults}, for some vector $v^{\delta k}\in\CC^N$ and some scalar $b\in\CC$, where $Q(\delta k)$ is the addition of a diagonal matrix $D^{\eps,\delta k}$ and the matrix $R^{\delta k}$. $D^{\eps,\delta k}$ is given through a simple formula consisting of all $\eps_i$ and  $\delta k$. $R^{\delta k}$ contains the intrinsic properties of the geometry. By determining the diagonal matrix according to the eigenvalues of $R^{\delta k}$ we made the matrix $Q(\delta k)$ nearly singular for some  values of $\delta k$, which in turn leads to an unusual behaviour for the intensity $I_s$ of the scattered field due to  the resonances of the resonators. In the numerical applications we have achieved almost full absorption and almost no absorption for wavelengths of around three units and additionally, we have obtained an abrupt phase-shift in that regime. These effects are similar to those observed experimentally for gradient meta-surfaces in \cite{add,BroadbandAnomalousReflection}. 

The proof requires that the wave number $\delta k$ satisfies $\delta k<2\pi-|\delta k_1|$. In numerical simulations, this constraint leads to $\delta k \in [0, 2.5]$.  If we look for shorter wavelengths, we need to rewrite $\Gdkp$, in Equation (\ref{equ:GammaPlus}), and Lemma \ref{lemma:rdel,r,rex}. This will make the formulas more complicated, but might also yield new and unforeseen results.

Since the connection between the parameters of the geometry ($\xi_i, l_i$ and $h_i$) and the eigenvalues of the matrix $R^{\delta k}$ is not explicit, the only way to evaluate $R^{\delta k}$ is through numerically solving integral equations. Hence we are left with trying different geometries until finding a geometry with the desired properties. One future point of interest in order to find optimal configurations might be to perform a sensitivity analysis, when we change either $\xi_i, l_i$ or $h_i$. We think that the case when two Helmholtz resonators get close to touching might reveal fascinating phenomena, as it does in the case of two nearly touching plasmonic spheres, see \cite{SanghyeonHabibSpheres, pnas}. 

Our framework is established in two dimensions. As said in the introduction, the authors of \cite{BroadbandAnomalousReflection} saw in their physical experiments no influence of the dimension of depth, that is the third dimension. However, a physical application of our gradient meta-surface might require a formula derived from a three dimensional Helmholtz resonator. To gather such a formula one needs a higher dimensional Green's function $\Gkp$, that is given in \cite{BubbleMetaScreen}, and associated higher dimensional hyper-singular integral operators. An application of those is given in \cite{HabibHai}.

In our mathematical model we used the Dirichlet and Neumann boundary conditions. Materials which satisfy these conditions perfectly are not commonly available. Thus different boundary conditions might model the physical problem more accurately. For example in \cite{oursteklovpaper}, Steklov boundary conditions are studied. Moreover, mixed boundaries as they are studied in \cite{ourzarembapaper} move the singularity in $k$ of the scattered wave away from $k=0$. Thus with mixed boundaries we might achieve a better control of the resonance effect. Additionally, our model has boundaries of vanishing width. To implement non-vanishing widths, one has to derive the behaviour of the scattered wave on the neck of the Helmholtz resonators.

Our results can lead to many practical applications. We are looking forward to experimental realizations of the unusual effects
of gradient meta-surfaces highlighted in this paper.

%\appendix
%\input{AppendixA}
%\input{AppendixB}
%\backmatter

\bibliographystyle{plain}
\bibliography{ms}

\end{document}